\documentclass[review]{elsarticle} 

\usepackage{float}
\usepackage{amsmath,amssymb,amsfonts,amstext}
\usepackage{epsfig}
\usepackage{setspace}
\usepackage{hhline}
\usepackage{color,soul}
\usepackage[dvipsnames]{xcolor}
\usepackage[justification=centering]{caption}
\usepackage{multirow}
\usepackage{booktabs}
\usepackage{bm}
\usepackage[english]{babel}
\usepackage{tikz}
\usepackage{tikzscale}
\usepackage{epstopdf}
\usepackage{adjustbox}
\usepackage{longtable}
\usepackage{accents}
\usepackage{lineno}
\usepackage{colortbl}
\usepackage[linesnumbered,ruled,vlined]{algorithm2e}
\usepackage{ragged2e}
\makeatletter
\usepackage{breqn}
\usepackage{amsthm}
\usepackage{pdflscape}
\usepackage{afterpage}
\usepackage{capt-of}
\usepackage{dsfont}
\usepackage{gensymb}
\usepackage[mathscr]{euscript} 
\usepackage{mathtools}

\newcolumntype{x}[1]{>{\raggedright\arraybackslash\hspace{0pt}}p{#1}}
\newtheorem{problem}{Problem}
\newtheorem{assumption}{Assumption}
\newtheorem{definition}{Definition}
\newtheorem{lemma}{Lemma}
\newtheorem{theorem}{Theorem}
\newtheorem{remark}{Remark}

\newcommand{\ceil}[1]{\left\lceil #1 \right\rceil}

\newcommand{\iter}[1]{\ensuremath{\langle #1 \rangle}}
\newcommand{\itrpt}[3]{\ensuremath{\bm{#1}^{\iter{#2}}_{#3}}}

\newcommand{\idxpt}[3]{\ensuremath{\bm{#1}^{(#2)}_{#3}}}
\newcommand{\idxptT}[3]{\ensuremath{\bm{#1}^{(#2)\top}_{#3}}}
\newcommand{\idxval}[3]{\ensuremath{#1^{(#2)}_{#3}}}

\newcommand{\titrpt}[3]{\ensuremath{\tilde{\bm{#1}}^{\iter{#2}}_{#3}}}
\newcommand{\triple}[3]{\ensuremath{\mathrm{col}\!\,(#1,\, #2,\, #3)}}

\newcommand{\admtriplesset}{\mathcal{T}'{}^{\iter{n}}}
\SetCommentSty{mycommfont}
\SetKwInput{KwInput}{Input}        
\SetKwInput{KwOutput}{Output}       

\DeclareMathOperator*{\argmin}{arg\,min}

\begin{document}

\begin{frontmatter}
\title{\underline{D}irect \underline{I}nference \underline{on} \underline{A}dmissible Sets (\textsc{Diona})}
\author{Lorenzo Sabug Jr.\corref{corr}}\ead{l.sabug21@imperial.ac.uk}
\author{Eric Kerrigan}\ead{e.kerrigan@imperial.ac.uk}

\cortext[corr]{Corresponding author. A preliminary version of this work was presented at the 23rd IFAC World Congress 2026, Busan, Republic of Korea. This work was supported by the UKRI EPSRC Grant EP/Z533816/1 ``Concurrent Learning and Control of Uncertain Large-Scale Phenomena''.}

\begin{abstract}
We propose a new regression method named Direct Inference on Admissible Sets (\textsc{Diona}) that directly uses samples to estimate the state and its derivatives at a query point, along with their admissible bounds, without long global training. By casting the values and derivatives at the query and neighbouring samples as latent variables in a constrained optimisation problem, the burden of long training loops becomes one of on-demand local solves. We establish \textsc{Diona}'s theoretical properties including finite-sample coverage and, in the noiseless case, convergence to the true state and derivatives under dense sampling. In a nonlinear regression benchmark on six functions of three to eight dimensions, \textsc{Diona} was more accurate than a Gaussian process (GP) on noiseless data especially with denser data sets. Furthermore, as with GP, it produced uncertainty bounds that contained the true value at more than 95\% of query points in four functions. \textsc{Diona}'s time to a first prediction after a data set update was three to four orders of magnitude shorter than that of the GP, and two to three orders shorter than that of a neural network, which was more accurate but offers no uncertainty quantification. This low latency is paid for by on-demand predictions, so this latency advantage applies when relatively few predictions are needed between data updates.
\end{abstract}
    
\end{frontmatter}

\section{Introduction}

The reconstruction of a hidden function from existing samples is a long-standing problem across science and engineering, especially when acquiring samples proves expensive and/or challenging, e.g., in computational fluid dynamics, wireless sensor networks-based data gathering, or robot-based exploration. Such a problem, referred to as ``data-driven regression'' or ``interpolation'', is a well-investigated topic \cite{theodoridis2020machine}, addressing the need for fast, accurate, and reliable estimates.

There have been many previous works on data-driven regression, with classical methods relying on basis function fitting \cite{martin2024polynomial,trefethen2013approximation}, kernel functions \cite{buhmann2003radial,rasmussen2006gaussian}, training parametric surrogate models, e.g., neural networks \cite{bishop1995neural}. However, there is need to choose/build a function library, kernel, and/or an architecture, which is consequential for the resulting accuracy and computational burden. Another class of methods rely on direct samples weighting \cite{roll2005direct,lu2008adaptive}, which however do not account for quadratic and/or curvature-related trends, affecting prediction accuracy for reconstructing nonlinear functions from sparse samples. Other approaches do not assume nor fit an \textit{a priori} known/designated function form, and choose their estimates from an admissible set. Examples include nonlinear set membership (NSM) methods \cite{milanese2004setmembership,milanese2011unified}, Lipschitz-based interpolation \cite{huang2023lipschitz}, and kinky inference \cite{calliess2020lacki}, whose heritage we take for our proposed method. Aside from point-value estimates, uncertainty quantification is investigated in different forms in \cite{milanese2004setmembership,calliess2020lacki,rasmussen2006gaussian,martin2024polynomial}, among others, a relevant quantity for exploration-based sampling. 

In many compelling applications like emergency response and robotic exploration, rapidly accumulated samples and evolving data sets must influence the next decisions, e.g., to take an action or select the next sampling points. For exploration in particular, the uncertainty measure is also a mission decision metric, rather than a \textit{post hoc} reported quantity. Therefore, there is demand for approaches that balance accurate predictions, informative and tight uncertainty bounds, and computational efficiency: in particular, a low latency between a data set update and a prediction.

This paper proposes a new formulation to use local information to estimate the state at a prediction point (also referred to in this paper as `query point'). Without assuming a functional form nor parametrised model, we use regularity assumptions on the underlying functions and draw the Taylor relationships between the query point and neighbouring samples, computing the admissible set of state values and derivatives. Our method builds on NSM, kinky inference, and Lipschitz interpolation, but they only exploit a first-order regularity assumption, so their uncertainty bounds grow linearly with distance from the closest sample. Our method, on the other hand, uses a Lipschitz-continuous Hessian with latent gradients and Hessians; these account for the local curvature leading to uncertainty that shrinks with the cube of the sample neighbourhood radius in the noiseless case. Our contributions are the following:

\begin{itemize}
    \item We cast the regression problem as a joint set estimation of admissible values and derivatives, using only (zeroth-order) function value samples, and regularity assumptions. As a result, we arrive at a point estimate, and the uncertainty bounds on the state and its derivatives.
    \item We derive the information radius---not just on the value estimate at the query point, but also its derivatives---for the function smoothness class we consider. Furthermore, in the noiseless case, we establish consistency on the value and derivatives as the samples set becomes dense.
    \item We use our results to build a practical regression algorithm which, given a samples set and a query point, solves small linear programs to simultaneously estimate the state and derivatives, as well as their corresponding uncertainty bounds. Our new Direct Inference on Admissible Sets (\textsc{Diona}) needs no offline training: every prediction is a per-query, on-demand solve, and therefore does not require any retraining in face of evolving (appended, changed) data sets.
\end{itemize}

We tested \textsc{Diona} in a nonlinear regression benchmark involving six functions of three to eight dimensions against a Gaussian process (GP), a neural network (NN), nonlinear set membership (NSM), radial basis functions and inverse distance weighting. In the noiseless cases, \textsc{Diona} had lower errors than the GP especially with the densest data sets. It also gave tighter uncertainty bounds than NSM while covering the true value within those bounds at more than 95\% of query points in four functions, similar to GP. In noisy cases, \textsc{Diona}'s accuracy advantages were swept away, although it remained competitive with the other methods. Furthermore, in the time that the GP needs to train, \textsc{Diona} produced around $10^4$ predictions ($10^2$--$10^3$ predictions within the training time of a GPU-trained NN). \textsc{Diona}'s timing advantage pays off in regimes where data sets change often and relatively few predictions are needed between updates.

A preliminary version of this work appeared as Direct Constraints Based Regression (DCBR) in \cite{sabug2026rethinking}. In relation to that work, we have significantly simplified the formulation, treated uncertainty quantification, investigated the theoretical properties, discussed implementation-related aspects, and performed a more exhaustive and detailed benchmark. The pertinent code base for \textsc{Diona}, along with the benchmark scripts, is available on GitHub under the following URL: \verb|https://github.com/lorenzosabugjr/Diona|.

\subsection*{Notation}

In this paper, we denote $\bm{x}' \in \mathbb{R}^D$ ($D$ is the dimensionality) as the query point, at which we estimate a state value $u'$ from existing samples and other \textit{a priori} information. We use ``sample'' to refer to the tuple $(\idxpt{x}{i}{}, \idxval{z}{i}{})$, where $\idxpt{x}{i}{} \in \mathbb{R}^D$ is the sampled location, and $\idxval{z}{i}{}$ is the (possibly noisy) measurement at $\idxpt{x}{i}{}$; we may refer to a sample just by $\idxpt{x}{i}{}$, as appropriate to the context. For any $\idxpt{x}{i}{}$, we define a displacement vector $\bm{a}_i = \bm{x}' - \idxpt{x}{i}{}$.

For brevity, we use the following shorthands for the value, gradient, and Hessian of $u$ at the query point: $u' = u(\bm{x}')$, $\bm{g}' = \nabla u(\bm{x}')$, $\bm{H}' = \nabla^2 u(\bm{x}')$; accordingly, we use $\idxval{u}{i}{}$, $\idxpt{g}{i}{}$, and $\idxpt{H}{i}{}$ to refer to $u(\idxpt{x}{i}{})$, $\nabla u(\idxpt{x}{i}{})$, and $\nabla^2 u(\idxpt{x}{i}{})$, respectively.

The Hessian $\bm{H}'$ (resp. $\idxpt{H}{i}{}$) is used only in the intermediate Taylor-based relations of Section~\ref{sec:taylor-uq}; however, in most of our mathematical development, we use its symmetric half-vectorisation $\bm{h}' \doteq \mathrm{vech}(\bm{H}') \in \mathbb{R}^{D_h}$, $D_h \doteq \frac{D(D+1)}{2}$ (and likewise $\idxpt{h}{i}{} \doteq \mathrm{vech}(\idxpt{H}{i}{})$). We define the \textit{state triple} at the query point as the column vector
\[
    \bm{\theta}' \doteq \triple{u'}{\bm{g}'}{\bm{h}'} \in \mathbb{R}^{1+D+D_h}.
\]
\noindent For a sample $\idxpt{x}{i}{}$, the corresponding triple is $\idxpt{\theta}{i}{} \doteq \triple{\idxval{u}{i}{}}{\idxpt{g}{i}{}}{\idxpt{h}{i}{}}$.

We reserve an undecorated $\bm\theta$ (with numeric subscripts $\bm\theta_1, \bm\theta_2, \ldots$ where more than one is needed) to denote a \textit{generic, arbitrary} element of an admissible set, e.g.\ inside a $\sup$, $\inf$, or $\forall$ quantifier. On the other hand, $\tilde{\cdot}$ denotes a specific, \textit{named} admissible triple, e.g.\ $\tilde{\bm{\theta}}' \doteq \triple{\tilde{u}'}{\tilde{\bm{g}}'}{\tilde{\bm{h}}'}$ for a nominal admissible triple used internally by the method described in this paper. All norms $\|\cdot\|$ are 2-norms, unless specified by a subscript. Lastly, $\underline{\sigma}(\cdot)$ denotes the minimum singular value of a matrix.

\section{Theoretical Framework}
\label{sec:theoretical-framework}

\subsection{Preliminaries}
We consider the estimation of an \textit{a priori} unknown state function $u(\bm{x})$, $u: \mathcal{R} \rightarrow \mathbb{R}$, where $\mathcal{R} \subset \mathbb{R}^D$ is a closed and compact $D$-dimensional convex region. While we do not have access to any closed-form expression of $u(\bm{x})$, i.e., it is ``black-box'', we take an assumption on its continuity, which we state as follows:

\begin{assumption}[Lipschitz Hessian continuity]
\label{assm:lipschitz-hessian}
    The latent value function $u(\bm{x})$ has a Lipschitz-continuous Hessian with an \textit{a priori} known constant $L_H > 0$,
    \[
        \left\|\nabla^2u(\bm{x}_1) - \nabla^2u(\bm{x}_2)\right\|_\infty \leq L_H \|\bm{x}_1 - \bm{x}_2\|, \;\; \bm{x}_1, \bm{x}_2 \in \mathcal{R}.
    \]
\end{assumption}

\noindent Furthermore, we can access individual values at discrete sampling points $\idxpt{x}{i}{} \in \mathcal{R}, \; i \in \mathbb{N}$ through sampling, for which we state another assumption:

\begin{assumption}[Sampling with finite noise bound]
\label{assm:finite-noise}
    For a discrete point $\idxpt{x}{i}{}$, it is possible to acquire the corresponding function value $u(\idxpt{x}{i}{})$, subject to noise with \textit{a priori} known finite bound $\bar{\epsilon} \in [0, \infty)$:

    \[
        \idxval{z}{i}{} = u(\idxpt{x}{i}{}) + \epsilon_i, \;\; |\epsilon_i| \leq \bar{\epsilon}.
    \]
\end{assumption}

\noindent The above assumption is commonly taken in practical settings, where evaluations are subject to (additive) sensor or process noise. However, we also cover the noiseless case: Assumption~\ref{assm:finite-noise} only assumes that the noise bound is \textit{a priori} known. Note that we only have access to the function values, and not to any first- nor higher-order derivatives, i.e., we are only given zeroth-order information.

Consider a data set $\itrpt{X}{n}{}$ composed of $n$ (possibly noise-injected) samples
\[
    \itrpt{X}{n}{} \doteq \left\{(\idxpt{x}{1}{}, \idxval{z}{1}{}), \ldots, (\idxpt{x}{n}{}, \idxval{z}{n}{})\right\}.
\]
\noindent To tackle the estimate of state $u'$ at an unsampled point (henceforth referred to as the ``query point'') $\bm{x}' \in \mathcal{R}$, we use our \textit{a priori} information as provided by the existing samples $\itrpt{X}{n}{}$ and Assumptions \ref{assm:lipschitz-hessian}-\ref{assm:finite-noise}. We now formally state our problem:

\begin{problem}[State triple estimation]
\label{prob:estimation-state-triple}
    Let $\itrpt{X}{n}{}$ be a data set composed of state value samples satisfying Assumptions~\ref{assm:lipschitz-hessian}-\ref{assm:finite-noise}. Design a method that, for an interior query point $\bm{x}' \in \mathcal{R}$, reports a state triple estimate $\tilde{\bm{\theta}}'$, and its corresponding bounding set $\admtriplesset$ satisfying the finite-sample coverage on the true triple,
    \[
        \bm{\theta}' \in \admtriplesset.
    \]
    In the noiseless case, both $\tilde{\bm{\theta}}'$ and $\admtriplesset$ shall converge to the true triple $\bm{\theta}'$ as sampling becomes dense in the sense defined later in Definition~\ref{def:dense-sampling}.
\end{problem}

We introduce in this paper a simple approach to solve Problem~\ref{prob:estimation-state-triple}, using a combination of Taylor series expansion and set membership approaches.

\subsection{Taylor-based uncertainty quantification and estimation}
\label{sec:taylor-uq}

We express $u'$ with respect to (w.r.t.) an existing sample $\idxval{u}{i}{}$ and its (unknown) derivatives up to the second order, with a truncation error $\delta_i$
\begin{equation}
\label{eqn:value-prediction}
    u' = \idxval{u}{i}{} + \idxptT{g}{i}{}\bm{a}_i + \frac{1}{2} \bm{a}_i^\top \idxpt{H}{i}{} \bm{a}_i + \delta_i,\;\; |\delta_i| \leq \frac{L_H}{6} \|\bm{a}_i\|^3.
\end{equation}
Invoking kinematic relations of the gradients and Hessians at $\bm{x}'$ and $\idxpt{x}{i}{}$,
\begin{align}
    \bm{g}' &= \idxpt{g}{i}{} + \frac{1}{2}\left(\bm{H}' + \idxpt{H}{i}{}\right)\bm{a}_i + \bm{\omega}_i,\;\; \|\bm{\omega}_i\|_\infty \leq \frac{L_H}{4} \|\bm{a}_i\|^2 \label{eqn:gradient-prediction}\\
    \bm{H}' &= \idxpt{H}{i}{} + \bm{\Gamma}_i, \;\; \|\bm{\Gamma}_i\|_\infty \leq L_H\|\bm{a}_i\|. \label{eqn:hessian-prediction}
\end{align}

Using $\bm{h}' = \mathrm{vech}(\bm{H}')$ and with $\bm{D}_D$ as the duplication matrix, we denote $\bm{\lambda}_i = \frac{1}{2} \bm{D}_D^\top(\bm{a}_i \otimes \bm{a}_i)$ such that (s.t.) $\frac{1}{2} \bm{a}_i^\top \idxpt{H}{i}{} \bm{a}_i = \bm{\lambda}_i^\top \idxpt{h}{i}{}$, and $\bm{c}_i = \frac{1}{2} \bm{D}_D^\top (\bm{a}_i \otimes \bm{I}_D)$ s.t. $\frac{1}{2} \idxpt{H}{i}{} \bm{a}_i = \bm{c}_i^\top \idxpt{h}{i}{}$. \eqref{eqn:value-prediction}-\eqref{eqn:hessian-prediction} then become
\begin{align}
    u' &= \idxval{u}{i}{} + \bm{a}_i^\top \idxpt{g}{i}{} + \bm{\lambda}_i^\top \idxpt{h}{i}{} + \delta_i,\;\; |\delta_i| \leq \frac{L_H}{6} \|\bm{a}_i\|^3 \\
    \bm{g}' &= \idxpt{g}{i}{} + \bm{c}_i^\top(\bm{h}' + \idxpt{h}{i}{}) + \bm{\omega}_i,\;\; \|\bm{\omega}_i\|_\infty \leq \frac{L_H}{4} \|\bm{a}_i\|^2 \\
    \bm{h}' &= \idxpt{h}{i}{} + \bm{\gamma}_i, \;\; \|\bm{\gamma}_i\|_\infty \leq L_H\|\bm{a}_i\|.
\end{align}

We consider the neighbouring samples set w.r.t. $\bm{x}'$, denoted as $\mathcal{N}^{\iter{n}} \subset \itrpt{X}{n}{}$ with cardinality $N$.
Adding the backward value expansions\footnote{The backwards gradient and Hessian relations only duplicate the original ones.}, and recasting all the previous equations as inequalities, we arrive at:
\begin{subequations}
\label{eqn:taylor-noise-relationships}
\begin{align}
    \left|u' - \left( \idxval{u}{i}{} + \bm{a}_i^\top \idxpt{g}{i}{} + \bm{\lambda}_i^\top \idxpt{h}{i}{} \right)\right| &\leq \frac{L_H}{6} \|\bm{a}_i\|^3 \\
    \left|\idxval{u}{i}{} - \left( u' - \bm{a}_i^\top \bm{g}' + \bm{\lambda}_i^\top \bm{h}' \right)\right| &\leq \frac{L_H}{6} \|\bm{a}_i\|^3 \label{eqn:taylor-from-query-point}\\
    \left\|\bm{g}' - \left(\idxpt{g}{i}{} + \bm{c}_i^\top(\bm{h}' + \idxpt{h}{i}{}) \right)\right\|_\infty &\leq \frac{L_H}{4} \|\bm{a}_i\|^2 \\
    \left\|\bm{h}' - \idxpt{h}{i}{}\right\|_\infty &\leq L_H\|\bm{a}_i\| \\
    |\idxval{u}{i}{} - \idxval{z}{i}{}| &\leq \bar{\epsilon}.
\end{align}
\end{subequations}
\noindent We collect the unknown state values and derivatives from the inequalities above, and organise as triples at the query point $\bm{x}'$ and its $N$ neighbours
\begin{equation}
\label{eqn:diona-decision-variable}
    \bm{\psi} \doteq \mathrm{col}\!\left(\bm{\theta}', \idxpt{\theta}{i_1}{}, \idxpt{\theta}{i_2}{}, \ldots, \idxpt{\theta}{i_{N}}{}\right),
\end{equation}
\noindent and the inequalities can be expressed in the form of
\[
    \bm{\Xi} \bm{\psi} \leq \bm{b}.
\]
\noindent with the matrices $\bm{\Xi}$ and $\bm{b}$ composed of terms pertaining to the displacements $\bm{a}_i$ and the Hessian Lipschitz constant $L_H$. Since the samples are fixed, $\bm{a}_i$ are constants and the inequality above is linear w.r.t. the unknowns, despite describing quadratic Taylor relations. Let us define the admissible set of state triples
\begin{equation}
\label{eqn:admissible-set-triples}
    \mathcal{A} = \left\{\bm{\psi}: \bm{\Xi}\bm{\psi} \leq \bm{b} \right\}. 
\end{equation}
Since $\bm{\psi}$ is the collection of the admissible triples at the query point and the neighbouring samples, we finally have
\[
    \mathcal{T}'{}^{\langle n \rangle} = \text{proj}_{\bm{\theta}'} \mathcal{A}, 
\]
\noindent and the upper- and lower bounds for $u(\bm{x}')$ are
\begin{subequations}
\label{eqn:upper-lower-bound-data-only}
\begin{align}
    \overline{u}' &= \max_{\bm{\theta} \in \admtriplesset} u \\
    \underline{u}' &= \min_{\bm{\theta} \in \admtriplesset} u.
\end{align}
\end{subequations}
\noindent Consequently, the state value estimate is the midpoint
\begin{equation}
    \tilde{u}' = \frac{\overline{u}' + \underline{u}'}{2},
\end{equation}
\noindent and the uncertainty measure is the difference
\begin{equation}
    w = \overline{u}' - \underline{u}'.
\end{equation}
\noindent Similar bounds and central estimates on $\bm{g}'$ and $\bm{h}'$ are readily available from $\admtriplesset$, using the same approach. Fig.~\ref{fig:diona-visualisation} shows a visual of curvature-aware uncertainty bounds on a 1-dimensional function given noiseless samples and an \textit{a priori} known $L_H$, compared to nonlinear set membership (NSM)-based bounds with a known Lipschitz constant $\gamma$.  

\begin{figure}[t]
\centering
\includegraphics[width=\textwidth]{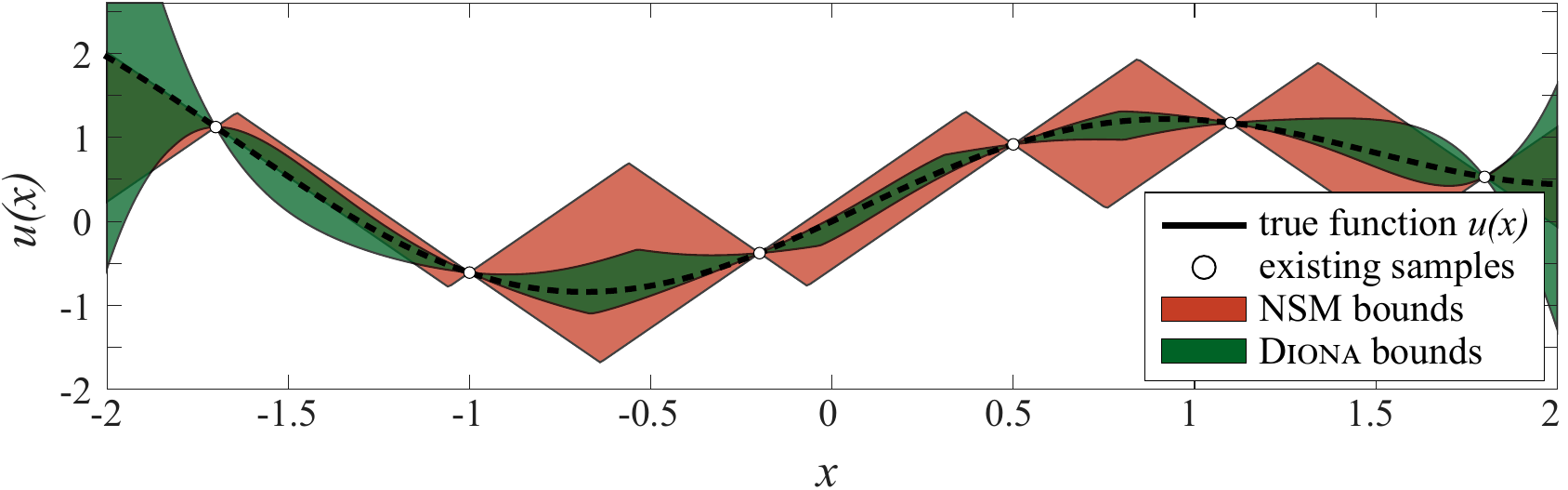}
\caption{1-dimensional visualisation of Hessian Lipschitz-based \textsc{Diona} uncertainty \\bounds from noiseless samples, with $u(x)=\sin(2x)+0.3x^2$, in comparison with \\a value Lipschitz-based NSM bounds.}
\label{fig:diona-visualisation}
\end{figure}

\subsection{Neighbour samples}
\label{sec:neighbour-samples}

For a candidate sample $\idxpt{x}{i}{}$ at trial radius $r$, we denote $\hat{\bm{a}}_i \doteq \frac{\bm{a}_i}{r}$ and $\hat{\bm{\lambda}}_i \doteq \frac{\bm{\lambda}_i}{r^2}$, and build a row $\bm{\phi}_i \doteq [1 ~ -\hat{\bm{a}}_i^\top ~ \hat{\bm\lambda}_i^\top]^\top$, following the Taylor expansion at \eqref{eqn:taylor-from-query-point}. For a candidate set $\mathcal{N}$, let $\bm{\Phi}$ be the stacked rows $\bm{\phi}_i^\top$, $\idxpt{x}{i}{}\in\mathcal{N}$.

\begin{definition}[$\mu_0$-poised neighbour set]
\label{def:poised-neighbour-set}
    A neighbour set $\mathcal{N}$ with $N \;\geq 1+D+D_h$ is $\mu_0$-\textit{poised} if for a given $\mu_0 > 0, \; \underline{\sigma}(\bm{\Phi}) \geq \mu_0 \sqrt{N}$.
\end{definition}

\begin{assumption}
\label{assm:poised-neighbour-set}
    For a query point $\bm{x}'$ in the interior of $\mathcal{R}$, the data set $\itrpt{X}{n}{}$ (with $n$ large enough) and $0 < \mu_0 < \bar{\mu}_D$\footnote{$\bar{\mu}_D$ is the supremum of $\underline{\sigma}(\bm{\Phi})/\sqrt{N}$ over all point sets, i.e.\ the square root of the $E$-optimal design value for second-order regression on the unit ball, obtained as in \cite{dette2014eoptimal} with the scaling of $\bm{\Phi}$: $\bar{\mu}_D = (16D+1)^{-1/2}$ for $D \leq 4$ and $\bar{\mu}_D = ((D+4)^2+1)^{-1/2}$ for $D \geq 4$.}, we have access to its corresponding minimum-radius $\mu_0$-poised neighbour set
    \[
        \mathcal{N}^{\iter{n}*} = \argmin_{\substack{\mathcal{M} \subset \itrpt{X}{n}{}, \\ \textrm{$\mu_0$-poised}}} \max_{\idxpt{x}{i}{} \in \mathcal{M}} \| \bm{x}' - \idxpt{x}{i}{} \|
    \]
    with radius $r^{\iter{n}*}_{\mathcal{N}}$.
\end{assumption}

In the succeeding theoretical discussions, we consider an interior query point $\bm{x}'$ with a corresponding poised neighbour set $\mathcal{N}$, indicating the richness of information to estimate $\bm{g}'$ and $\bm{H}'$.

\section{Theoretical properties}
\label{sec:theoretical-properties}

Here we introduce theoretical properties of our proposed method, whose proofs are all delegated to the Appendix.

\begin{lemma}[Containment]
\label{lemma:containment}
    Given the query point $\bm{x}'$, the data set $\itrpt{X}{n}{}$, and under Assumptions~\ref{assm:lipschitz-hessian}-\ref{assm:finite-noise}, the true query triple $\bm{\theta}'$ is contained in $\admtriplesset$.
\end{lemma}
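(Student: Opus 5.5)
The plan is to exhibit an explicit feasible point of $\mathcal{A}$ whose query block is the true triple. Specifically, we take the vector $\bm{\psi}^\star \doteq \mathrm{col}(\bm{\theta}', \idxpt{\theta}{i_1}{}, \ldots, \idxpt{\theta}{i_N}{})$ built from the \emph{true} values, gradients and half-vectorised Hessians of $u$ at $\bm{x}'$ and at each neighbour. We then show $\bm{\Xi}\bm{\psi}^\star \leq \bm{b}$, i.e.\ that every inequality in \eqref{eqn:taylor-noise-relationships} holds for every neighbour. Since $\admtriplesset = \mathrm{proj}_{\bm{\theta}'}\mathcal{A}$, membership $\bm{\psi}^\star\in\mathcal{A}$ immediately gives $\bm{\theta}'\in\admtriplesset$.

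The verification goes row by row, for a fixed neighbour $\idxpt{x}{i}{}$. The noise row $|\idxval{u}{i}{}-\idxval{z}{i}{}|\le\bar\epsilon$ is Assumption~\ref{assm:finite-noise} directly. For the value rows, we use Taylor's theorem with integral remainder along the segment $[\idxpt{x}{i}{},\bm{x}']$, which lies in $\mathcal{R}$ by convexity. Writing the remainder as $\int_0^1 \frac{(1-t)^2}{2}\,\bm{a}_i^\top(\nabla^2u(\idxpt{x}{i}{}+t\bm{a}_i)-\idxpt{H}{i}{})\bm{a}_i\,dt$ and applying Assumption~\ref{assm:lipschitz-hessian} gives the bound $\frac{L_H}{6}\|\bm{a}_i\|^3$. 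Expanding instead from $\bm{x}'$ with displacement $-\bm{a}_i$ gives the backward row \eqref{eqn:taylor-from-query-point}; the sign flip on the gradient term and the evenness of the quadratic term account for $-\bm{a}_i^\top\bm{g}'+\bm{\lambda}_i^\top\bm{h}'$. The Hessian row follows immediately from Assumption~\ref{assm:lipschitz-hessian}, since $\mathrm{vech}$ selects entries, so $\|\bm{h}'-\idxpt{h}{i}{}\|_\infty$ is at most the entrywise bound on $\bm{H}'-\idxpt{H}{i}{}$. Finally, the identities $\frac12\bm{a}_i^\top\bm{H}\bm{a}_i=\bm{\lambda}_i^\top\mathrm{vech}(\bm{H})$ and $\frac12\bm{H}\bm{a}_i=\bm{c}_i^\top\mathrm{vech}(\bm{H})$ for symmetric $\bm{H}$ are routine duplication-matrix identities, so the vectorised rows are equivalent to the matrix forms.

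The main obstacle is the gradient row, the trapezoidal-type relation \eqref{eqn:gradient-prediction}. Here we would write $\bm{g}'-\idxpt{g}{i}{}=\int_0^1\nabla^2u(\idxpt{x}{i}{}+t\bm{a}_i)\bm{a}_i\,dt$ and subtract $\frac12(\bm{H}'+\idxpt{H}{i}{})\bm{a}_i$. This gives the error $\int_0^1[\nabla^2u(\cdot)-((1-t)\idxpt{H}{i}{}+t\bm{H}')]\bm{a}_i\,dt$, because the linear interpolant of the two endpoint Hessians also integrates to the average. Each component of the bracket is bounded by $L_H\min(t,1-t)\|\bm{a}_i\|$, using the triangle inequality towards the nearer endpoint. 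Integrating $\min(t,1-t)$ over $[0,1]$ gives $\tfrac14$, which yields $\frac{L_H}{4}\|\bm{a}_i\|^2$. The delicate point is the norm: the assumption controls $\|\cdot\|_\infty$ of the Hessian difference, and the target is the $\infty$-norm of a vector. If $\|\cdot\|_\infty$ is read as the induced (max row sum) norm, the step $\|\bm{M}\bm{a}_i\|_\infty\le\|\bm{M}\|_\infty\|\bm{a}_i\|_\infty\le\|\bm{M}\|_\infty\|\bm{a}_i\|$ closes the argument. If instead the assumption is entrywise, I would adopt the norm convention under which the stated constants hold, consistent with how \eqref{eqn:gradient-prediction}--\eqref{eqn:hessian-prediction} are asserted in Section~\ref{sec:taylor-uq}, and note this explicitly.

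Once all rows are verified for every neighbour, $\bm{\psi}^\star\in\mathcal{A}$ and containment follows; no poisedness or density assumption is needed.
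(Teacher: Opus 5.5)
Your skeleton is exactly the paper's proof: take the true $\bm\psi$, check $\bm\Xi\bm\psi\le\bm b$, and project onto the $\bm\theta'$ block. The paper treats the Taylor bounds \eqref{eqn:value-prediction}--\eqref{eqn:hessian-prediction} as already established in Section~\ref{sec:taylor-uq} and simply declares membership ``by construction''.

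The row-by-row verification you add goes beyond the paper, but the step you single out as the main obstacle contains a false claim. The entries of $\nabla^2u(\bm x^{(i)}+t\bm a_i)-\bigl((1-t)\bm H^{(i)}+t\bm H'\bigr)$ are \emph{not} bounded by $L_H\min(t,1-t)\|\bm a_i\|$.
\begin{itemize}
\item For $t\le\frac12$ the bracket equals $(\nabla^2u-\bm H^{(i)})-t(\bm H'-\bm H^{(i)})$. The triangle inequality therefore also picks up $t\|\bm H'-\bm H^{(i)}\|_\infty\le tL_H\|\bm a_i\|$, giving only $2L_H\min(t,1-t)\|\bm a_i\|$.
\item The best pointwise bound, from splitting the bracket as a convex combination, is $2t(1-t)L_H\|\bm a_i\|$, and it is attained. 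Take $D=1$, $\bm a_i=1$, and $u''(s)=L_Hs$ on $[0,\frac14]$, $u''(s)=L_H(\frac12-s)$ on $[\frac14,1]$. At $t=\frac14$ the bracket equals $\frac38L_H>\frac14L_H$.
\item Integrating this correct pointwise bound yields only $\frac{L_H}{3}\|\bm a_i\|^2$, not the stated $\frac{L_H}{4}\|\bm a_i\|^2$.
\end{itemize}
The repair is to compare with the nearest-endpoint step function instead of the linear interpolant. Set $S(t)=\bm H^{(i)}$ for $t<\frac12$ and $S(t)=\bm H'$ for $t\ge\frac12$; it also integrates to $\frac12(\bm H^{(i)}+\bm H')$. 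Then $\|\nabla^2u(\bm x^{(i)}+t\bm a_i)-S(t)\|_\infty\le L_H\min(t,1-t)\|\bm a_i\|$ really is a single application of Assumption~\ref{assm:lipschitz-hessian}. Since $\int_0^1\min(t,1-t)\,dt=\frac14$, this gives the sharp constant. Equivalently, integrate by parts against the Peano kernel $t-\frac12$.

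There are two smaller corrections.
\begin{itemize}
\item In the value row, the weight is wrong. After subtracting $\frac12\bm a_i^\top\bm H^{(i)}\bm a_i$, the remainder is $\int_0^1(1-t)\,\bm a_i^\top\bigl(\nabla^2u(\bm x^{(i)}+t\bm a_i)-\bm H^{(i)}\bigr)\bm a_i\,dt$. The weight $\frac{(1-t)^2}{2}$ belongs to the third-derivative form, and paired with the Hessian difference it would give $\frac{1}{24}$. With the correct weight, $\int_0^1 t(1-t)\,dt=\frac16$ gives the stated constant.
\item The norm caveat you raise for the gradient row applies equally to the value row. Under the induced reading, $|\bm a^\top\bm M\bm a|\le\|\bm M\|_\infty\|\bm a\|^2$ holds for symmetric $\bm M$, because $\|\bm M\|_2$ equals the spectral radius, which is at most $\|\bm M\|_\infty$. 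Under the entrywise reading you lose a factor $D$.
\end{itemize}
Adopting the induced (max row sum) reading makes all three Taylor rows hold with the paper's constants. With these fixes your proof is complete and agrees with the paper's.
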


\textit{Proof.} See Appendix.

With Lemma~\ref{lemma:worst-case-diameter-states-derivatives}, the true triples at the query point $\bm{\theta}'$ and its neighbours are admissible for any number of samples by construction. This result therefore relies on the validity or \textit{a priori} knowledge of $L_H$ and $\bar\epsilon$: resorting to their (under)estimates (as in Section~\ref{sec:parameter-estimation}) voids this guarantee.

\begin{lemma}[Worst-case diameter of admissible states and derivatives]
\label{lemma:worst-case-diameter-states-derivatives}
    For the query point $\bm{x}'$ and data set $\itrpt{X}{n}{}$, consider two admissible triples $\bm{\theta}_1, \bm{\theta}_2 \in \admtriplesset$, and denote the differences in their latent states and derivatives as $\Delta u = u_1 - u_2, \; \Delta \bm{g} = \bm{g}_1 - \bm{g}_2, \; \Delta \bm{h} = \bm{h}_1 - \bm{h}_2$. Then, under Assumptions~\ref{assm:lipschitz-hessian}-\ref{assm:finite-noise}, and with any $\mu_0$-poised neighbour set $\mathcal{N}$,
    \begin{subequations}
    \label{eqn:worst-case-diam}
    \begin{align}
        | \Delta u | &\leq d_u = \frac{1}{\mu_0} \left(2\bar{\epsilon} + \frac{L_H}{3}r^3_{\mathcal{N}}\right) \\
        \| \Delta \bm{g} \|_2 &\leq d_{\bm{g}} = \frac{1}{\mu_0} \left(\frac{2\bar{\epsilon}}{r_{\mathcal{N}}} + \frac{L_H}{3}r^2_{\mathcal{N}}\right) \\
        \| \Delta \bm{h} \|_2 &\leq d_{\bm{h}} = \frac{1}{\mu_0} \left(\frac{2\bar{\epsilon}}{r^2_{\mathcal{N}}} + \frac{L_H}{3}r_{\mathcal{N}}\right)
    \end{align}
    \end{subequations}    
    where $r_{\mathcal{N}} = \max_{\idxpt{x}{i}{} \in \mathcal{N}} \|\idxpt{x}{i}{} - \bm{x}'\|$.
\end{lemma}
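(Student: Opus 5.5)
The plan is to use only the backward value expansions \eqref{eqn:taylor-from-query-point} together with the noise constraints, and to let the poisedness of $\bm{\Phi}$ turn a bound on residuals into a bound on the triple difference.

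\textbf{Step 1: residuals for one admissible triple.} Fix an admissible $\bm{\psi}\in\mathcal{A}$ whose query-point component is $\bm{\theta}_1$. For each neighbour $\idxpt{x}{i}{}\in\mathcal{N}$, combine \eqref{eqn:taylor-from-query-point} with $|\idxval{u}{i}{}-\idxval{z}{i}{}|\le\bar\epsilon$ via the triangle inequality. This gives
\[
\left|\idxval{z}{i}{} - \left(u_1 - \bm{a}_i^\top\bm{g}_1 + \bm{\lambda}_i^\top\bm{h}_1\right)\right| \le \bar\epsilon + \frac{L_H}{6}\|\bm{a}_i\|^3 \le \bar\epsilon + \frac{L_H}{6}r_{\mathcal{N}}^3 .
\]
The key point is that the latent neighbour value $\idxval{u}{i}{}$ has been eliminated, so the residual depends only on $\bm{\theta}_1$ and the data. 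The same bound holds for $\bm{\theta}_2$.

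\textbf{Step 2: rescale and stack.} Subtracting the two residual bounds yields
\[
\left|\Delta u - \bm{a}_i^\top\Delta\bm{g} + \bm{\lambda}_i^\top\Delta\bm{h}\right| \le 2\bar\epsilon + \frac{L_H}{3}r_{\mathcal{N}}^3 \doteq \beta
\]
for every $i\in\mathcal{N}$. Now take the trial radius $r=r_{\mathcal{N}}$ and introduce the scaled difference $\bm{\eta}\doteq \mathrm{col}(\Delta u,\, r_{\mathcal{N}}\Delta\bm{g},\, r_{\mathcal{N}}^2\Delta\bm{h})$. Since $\bm{a}_i^\top\Delta\bm{g}=\hat{\bm{a}}_i^\top(r\Delta\bm{g})$ and $\bm{\lambda}_i^\top\Delta\bm{h}=\hat{\bm{\lambda}}_i^\top(r^2\Delta\bm{h})$, the left-hand side above is exactly $\bm{\phi}_i^\top\bm{\eta}$. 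Stacking over the $N$ neighbours gives $\|\bm{\Phi}\bm{\eta}\|_\infty\le\beta$, and hence $\|\bm{\Phi}\bm{\eta}\|_2\le\sqrt{N}\beta$.

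\textbf{Step 3: invoke poisedness.} Because $\mathcal{N}$ is $\mu_0$-poised, $\bm{\Phi}$ has full column rank and
\[
\mu_0\sqrt{N}\,\|\bm{\eta}\|_2 \le \underline{\sigma}(\bm{\Phi})\|\bm{\eta}\|_2 \le \|\bm{\Phi}\bm{\eta}\|_2 \le \sqrt N\beta ,
\]
so $\|\bm{\eta}\|_2\le\beta/\mu_0$. Each block of $\bm{\eta}$ is bounded by $\|\bm{\eta}\|_2$, so $|\Delta u|\le\beta/\mu_0$, $\|\Delta\bm{g}\|_2\le\beta/(\mu_0 r_{\mathcal{N}})$ and $\|\Delta\bm{h}\|_2\le\beta/(\mu_0 r_{\mathcal{N}}^2)$. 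Expanding $\beta$ then reproduces $d_u$, $d_{\bm g}$ and $d_{\bm h}$ in \eqref{eqn:worst-case-diam}.

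\textbf{Main obstacle.} The delicate step is the scaling bookkeeping in Step 2. The sign convention in $\bm{\phi}_i$ (with $-\hat{\bm a}_i$) must match the backward expansion, and the normalisation by the same radius $r_{\mathcal{N}}$ must be the one under which poisedness is certified. If poisedness was instead certified at a different trial radius $r$, one would redo Step 2 with that $r$. The bound still holds as long as $r\ge r_{\mathcal{N}}$ is used consistently in the cubic term, but the constants then change. I would therefore state explicitly that $\bm{\Phi}$ is built with $r=r_{\mathcal{N}}$.

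All the other constraints in $\mathcal{A}$ (the forward value, gradient and Hessian relations) are simply dropped. This is legitimate because relaxing constraints can only enlarge the set, so a bound on the larger set's diameter is also a bound for $\admtriplesset$.
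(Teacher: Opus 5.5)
Your proposal is correct and follows the paper's proof essentially step for step. You eliminate $\idxval{u}{i}{}$ with the noise constraint, subtract the two backward-expansion residuals, rescale by $r_{\mathcal{N}}$ so that each row becomes $\bm{\phi}_i^\top\bm{\eta}$, and turn $\|\bm{\Phi}\bm{\eta}\|_2\le\sqrt{N}\beta$ into $\|\bm{\eta}\|_2\le\beta/\mu_0$ via poisedness. Using $\underline{\sigma}(\bm{\Phi})\|\bm{\eta}\|_2\le\|\bm{\Phi}\bm{\eta}\|_2$ directly is equivalent to the paper's pseudo-inverse bound, and lifting $\bm{\theta}_1,\bm{\theta}_2$ to elements of $\mathcal{A}$ with $\bm{\Phi}$ normalised at $r=r_{\mathcal{N}}$ only makes explicit what the paper assumes tacitly.
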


\textit{Proof.} See Appendix.

In the noiseless setup, going from $u'$ to $\bm{g}'$ to $\bm{h}'$ costs the bounds by one power of the neighbourhood radius ($r_{\mathcal{N}}^3$, to $r_{\mathcal{N}}^2$ and $r_{\mathcal{N}}$): considering a neighbourhood with half the radius tightens the $u'$ bound eightfold. With noise, $d_u$ tends to a floor of $\frac{2\overline{\epsilon}}{\mu_0}$ as $r_{\mathcal{N}}\rightarrow0$. In contrast, $d_{\bm{g}}$ and $d_{\bm{h}}$ grow again when considering neighbourhoods with $r_{\mathcal{N}} < \left(\frac{3\overline{\epsilon}}{L_H}\right)^{1/3}$ and $r_{\mathcal{N}} < \left(\frac{12\overline{\epsilon}}{L_H}\right)^{1/3}$, respectively.


\begin{remark}[Information radius]
\label{remark:information-radius}
    When the state triple estimate $\tilde{\bm{\theta}}'$ is taken as the centroid\footnote{which is not required to be in the feasible triples set, see \cite{milanese2004setmembership}} of the componentwise minimum bounding box of $\admtriplesset$, then the $\infty$-norm information radii (as in \cite{milanese2004setmembership}) on $u$, $\bm{g}$, and $\bm{h}$ are bounded by $\frac{1}{2}d_u, \frac{1}{2}d_{\bm{g}},$ and $\frac{1}{2}d_{\bm{h}}$, respectively, since $\|\cdot\|_\infty \leq \|\cdot\|_2$.
\end{remark}

\begin{definition}[Dense sampling]
\label{def:dense-sampling}
Let $\itrpt{X}{n}{}$ denote the set of sampled points after $n$ samplings. We say that a sampling regime is dense in $\mathcal{R}$ if
\[
    \max_{\bm{x}' \in \mathcal{R}} \min_{\idxpt{x}{i}{} \in \itrpt{X}{n}{}} \left\|\bm{x}' - \idxpt{x}{i}{} \right\| \xrightarrow[n\rightarrow \infty]{} 0.
\]
\end{definition}

\begin{lemma}
\label{lemma:dense-samples}
    Given a query point $\bm{x}' \in \mathcal{R}$, and under Assumptions~\ref{assm:lipschitz-hessian}-\ref{assm:poised-neighbour-set} and dense sampling, $r_{\mathcal{N}}^{\iter{n}*} \xrightarrow[n \rightarrow \infty]{} 0$.
\end{lemma}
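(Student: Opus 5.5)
We will show that for every $\rho>0$ there is $n_\rho$ such that $r^{\iter{n}*}_{\mathcal{N}}\le\rho$ for all $n\ge n_\rho$. The minimum-radius poised set is at least as good as any poised set we can exhibit, so it suffices to construct, for all large $n$, some $\mu_0$-poised subset of $\itrpt{X}{n}{}$ contained in the ball $B(\bm{x}',\rho)$. The construction perturbs a fixed well-poised template design using dense sampling.

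\textbf{Step 1 (template).} Since $\mu_0<\bar{\mu}_D$ and $\bar{\mu}_D$ is the supremum of $\underline{\sigma}(\bm{\Phi})/\sqrt{N}$ over point sets, we fix a point set $\{\bm{y}_1,\ldots,\bm{y}_N\}$ with $N\ge 1+D+D_h$ in the closed unit ball. We choose it so that, with the normalised rows $\bm{\phi}_j=[1~{-\bm{y}_j^\top}~\hat{\bm\lambda}(\bm{y}_j)^\top]$, we have $\underline{\sigma}(\bm{\Phi}_{\mathrm{tmpl}})\ge\mu_1\sqrt{N}$ for some $\mu_1\in(\mu_0,\bar{\mu}_D)$. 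Shrinking radially by a factor $1-\eta$ changes the rows continuously. Since $\underline{\sigma}$ is continuous, we may take the template strictly inside the unit ball and still keep a margin $\mu_1>\mu_0$.

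\textbf{Step 2 (scale invariance and placement).} We scale the template to the target points $\bm{x}'-\rho\bm{y}_j$. For $\rho$ small these lie in $\mathcal{R}$, because $\bm{x}'$ is interior. The normalisation by the trial radius $r$ in $\hat{\bm a}_i=\bm a_i/r$ and $\hat{\bm\lambda}_i=\bm\lambda_i/r^2$ makes $\bm{\Phi}$ invariant under this scaling, provided $r$ is taken as $\rho$. We should check how the trial radius enters Definition~\ref{def:poised-neighbour-set}. If it equals the actual set radius $r_{\mathcal{N}}$, we instead keep one template point on the unit sphere, so the normalisation is exact up to perturbation. In either case the scaled rows coincide with the template rows.

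\textbf{Step 3 (dense sampling and perturbation).} By Definition~\ref{def:dense-sampling}, the fill distance $h_n\to0$. Each target point $\bm{x}'-\rho\bm{y}_j$ therefore has a sample $\idxpt{x}{i_j}{}$ within distance $h_n$. The normalised displacements then differ from $\bm{y}_j$ by at most $h_n/\rho$, and the quadratic features $\hat{\bm\lambda}$ are Lipschitz on bounded sets. Hence $\|\bm{\Phi}_n-\bm{\Phi}_{\mathrm{tmpl}}\|\le C\sqrt{N}\,h_n/\rho$. Weyl's inequality for singular values then gives $\underline{\sigma}(\bm{\Phi}_n)\ge(\mu_1-Ch_n/\rho)\sqrt{N}\ge\mu_0\sqrt{N}$ once $h_n$ is small enough. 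The selected samples lie within radius $\rho+h_n\le2\rho$, so $r^{\iter{n}*}_{\mathcal{N}}\le2\rho$. Since $\rho$ is arbitrary, the claim follows.

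\textbf{Main obstacle.} The hardest step is Step 3. First, the chosen samples must be distinct: we take $h_n$ smaller than half the minimum template separation times $\rho$. Second, the radius used to normalise $\bm{\Phi}$ depends on the selected points, and this feedback must be handled. The approach is to bound the ratio of actual to nominal radius by $1\pm h_n/\rho$ and absorb it into the same Weyl perturbation.
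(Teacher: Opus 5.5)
Your proposal is correct and takes essentially the paper's approach: a $\mu_s$-poised template with $\mu_s>\mu_0$ (which exists because $\mu_0<\bar{\mu}_D$) is placed in a small ball about the interior point $\bm{x}'$ using the radius-normalisation invariance of $\bm{\Phi}$; dense sampling then drives the nearest samples onto the template points, and continuity of $\underline{\sigma}$ (your Weyl bound) makes the resulting subset $\mu_0$-poised with radius comparable to that ball. The differences are only in bookkeeping and favour your version: you argue directly for every $\rho$ instead of by contradiction with the recursive $\tfrac{3}{4}$-shrinking, so you do not need the paper's nested, one-sample-per-iteration data sets, and you explicitly handle the distinctness of the selected samples and the dependence of the normalising radius on them, which the paper's proof leaves implicit.
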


\textit{Proof.} See Appendix.

\begin{theorem}[Convergence to the true triple]
\label{theorem:convergence-true-triple}
    Under Assumptions \ref{assm:lipschitz-hessian}-\ref{assm:poised-neighbour-set} and $\bar{\epsilon}=0$, the admissible set of triples at $\bm{x}'$ shrinks to the true one, i.e.,
    \begin{equation}
    \label{eqn:convergence-true-triple}
        \sup_{\bm{\theta} \in \admtriplesset} \| \bm{\theta} - \bm{\theta}' \| \xrightarrow[n\rightarrow \infty]{} 0
    \end{equation}
    as the samples set densely covers the search space.
\end{theorem}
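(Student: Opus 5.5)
The plan is to combine Lemma~\ref{lemma:containment}, Lemma~\ref{lemma:worst-case-diameter-states-derivatives} and Lemma~\ref{lemma:dense-samples}. For each $n$, take as neighbour set the minimum-radius $\mu_0$-poised set $\mathcal{N}^{\iter{n}*}$ from Assumption~\ref{assm:poised-neighbour-set}, with radius $r^{\iter{n}*}_{\mathcal{N}}$. Lemma~\ref{lemma:containment} gives $\bm{\theta}' \in \admtriplesset$ for every $n$, so the true triple is itself an admissible element. For an arbitrary $\bm{\theta} \in \admtriplesset$ we may therefore apply Lemma~\ref{lemma:worst-case-diameter-states-derivatives} with $\bm{\theta}_1 = \bm{\theta}$ and $\bm{\theta}_2 = \bm{\theta}'$.

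Setting $\bar\epsilon = 0$, this yields the componentwise bounds
\[
|u - u'| \leq \frac{L_H}{3\mu_0}(r^{\iter{n}*}_{\mathcal{N}})^3,\quad
\|\bm{g} - \bm{g}'\| \leq \frac{L_H}{3\mu_0}(r^{\iter{n}*}_{\mathcal{N}})^2,\quad
\|\bm{h} - \bm{h}'\| \leq \frac{L_H}{3\mu_0}r^{\iter{n}*}_{\mathcal{N}}.
\]
Since $\|\bm{\theta}-\bm{\theta}'\|^2$ is the sum of the three squared component errors, we obtain
\[
\sup_{\bm{\theta}\in\admtriplesset}\|\bm{\theta}-\bm{\theta}'\| \leq \frac{L_H}{3\mu_0}\sqrt{r^6+r^4+r^2},\qquad r = r^{\iter{n}*}_{\mathcal{N}}.
\]
The right-hand side does not depend on $\bm{\theta}$, so it bounds the supremum uniformly. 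By Lemma~\ref{lemma:dense-samples}, dense sampling gives $r^{\iter{n}*}_{\mathcal{N}}\to 0$. Hence the bound tends to zero, which proves \eqref{eqn:convergence-true-triple}. The same argument shows that the midpoint estimate $\tilde{\bm{\theta}}'$ converges, because it lies within the bounding box of $\admtriplesset$.

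\textbf{Main obstacle.} The delicate step is whether $\admtriplesset$ at stage $n$ is actually built from the poised set $\mathcal{N}^{\iter{n}*}$, so that Lemma~\ref{lemma:worst-case-diameter-states-derivatives} applies with radius $r^{\iter{n}*}_{\mathcal{N}}$. I will state explicitly that the method uses $\mathcal{N}^{\iter{n}} = \mathcal{N}^{\iter{n}*}$.

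If a larger neighbour set is used instead, the fix is a monotonicity argument. Adding constraints only shrinks $\mathcal{A}$, so the projection onto the query triple can only shrink as well. Consequently the bound obtained from the constraints of $\mathcal{N}^{\iter{n}*}$ alone still holds.

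A second point to check is that $\mu_0$ stays fixed as $n$ grows. This matters because $\mu_0$ is the constant in the denominator of every bound, and Assumption~\ref{assm:poised-neighbour-set} provides it uniformly in $n$, once $n$ is large enough for a poised set to exist.
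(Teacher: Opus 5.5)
Your proof is correct and follows the paper's own argument. Lemma~\ref{lemma:containment} puts $\bm{\theta}'$ in $\admtriplesset$; Lemma~\ref{lemma:worst-case-diameter-states-derivatives} with $\bar\epsilon=0$ and $\bm{\theta}_2=\bm{\theta}'$ gives the uniform bound $\frac{L_H}{3\mu_0}\sqrt{r^6+r^4+r^2}$; and Lemma~\ref{lemma:dense-samples} sends $r\to 0$. You also state explicitly that $\admtriplesset$ must be built from (a superset of) the poised set $\mathcal{N}^{\langle n\rangle *}$, which the paper leaves implicit; strictly, adding neighbours adds fresh latent variables as well as constraints, but each couples only to $\bm{\theta}'$, so the projection can only shrink.
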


\textit{Proof.} See Appendix.

The result in Theorem~\ref{theorem:convergence-true-triple} relies on $\overline{\epsilon}=0$, a known $L_H$, and a poised neighbour set with radius that shrinks with sampling (Lemma~\ref{lemma:dense-samples}). 
Given Theorem~\ref{theorem:convergence-true-triple} and Remark~\ref{remark:information-radius}, the estimated triple converges to the true one, $\|\tilde{\bm{\theta}}' - \bm{\theta}'\|\rightarrow 0$.


\section{Implementation notes}
\label{sec:implementation-notes}

We tackle here practical workarounds for settings where we do not satisfy Assumptions~\ref{assm:lipschitz-hessian}-\ref{assm:poised-neighbour-set}. We use a combination of samples-based estimation of $\overline{\epsilon}, L_H$ and a greedy poised neighbours selection approach.

\subsection{Parameter estimation}
\label{sec:parameter-estimation}

In many applications, the Hessian Lipschitz constant $L_H$, and the noise bound $\bar\epsilon$ are not \textit{a priori} known, and must be estimated from the data set $\itrpt{X}{n}{}$. To estimate $\bar \epsilon$, any preferred method can be used, and here we use the set membership-based approach from \cite{novara2016setmembership}, to which we refer the reader for more information.

We estimate $L_H$ by solving an LP over the active points set $\mathcal{P} = \bm{x}' \cup \mathcal{N}$,
\[
    \tilde{L}_H = \min_{L_H,\, \bm\psi_{\mathcal{P}}} L_H \quad \text{s.t.} \quad \bm\Xi_{\mathcal{P}}\bm\psi_{\mathcal{P}} \leq \bm{b}_{\mathcal{P}},
\]
where $\bm\Xi_{\mathcal{P}}, \bm{b}_{\mathcal{P}}$ are assembled similarly as in Section~\ref{sec:taylor-uq}, but with $L_H$ rearranged/cast as a decision variable (stacked with $\bm{\theta}', \idxpt{\theta}{i_1}{}, \dots$, see \eqref{eqn:diona-decision-variable}).

Resorting to $L_H$ and $\bar\epsilon$ estimates invalidates the formal theoretical guarantees. Nevertheless, we show in the results that \textsc{Diona} still performs competitively even with estimated constants.

\subsection{Practical neighbour selection}

Selecting a minimum-radius $\mu_0$-poised neighbour set to satisfy Assumption~\ref{assm:poised-neighbour-set} is combinatorial w.r.t. the samples, and can be computationally prohibitive. Therefore, we have built a simple greedy method to collect the neighbours set. This involves starting from a nearest samples set as initial candidates for selection, and building a neighbours set achieving a full-rank feature matrix $\bm{\Phi}_{\mathcal{C}}$. From this neighbours set, we then iteratively add further samples to achieve the $\mu_0$-poisedness.

Upon empirical testing, we opted for three passes of Algorithm~\ref{alg:neighbour-selection} (each successive pass excludes previously selected samples), and use the union of all such neighbour samples to provide redundancy for computing $\tilde{L}_H$ and $\underline{u}'$, and $\overline{u}'$. This is to achieve a more informative $\tilde{L}_H$ estimate\footnote{This is because having only one neighbour ``layer'' will almost always fit a quadratic, corresponding to $\tilde{L}_H \approx 0$.} and more reliable bounds especially under noise. With \textit{a priori} known $\overline{\epsilon}$ and $L_H$,  Lemma~\ref{lemma:worst-case-diameter-states-derivatives} still holds for the first neighbour set as the succeeding neighbour ``layers'' only serve to add more constraints.


\begin{algorithm}[t]
    \small
    \DontPrintSemicolon
    \caption{Practical $\mu_0$-poised neighbour selection}
    \label{alg:neighbour-selection}
    \KwInput{query point $\bm{x}'$, poisedness threshold $\mu_0>0$, maximum~selected~neighbours $\overline{N}$}
    \KwData{samples set $\itrpt{X}{n}{}$}
    \KwOutput{$\tilde{\mathcal N}$, poisedness $\mu(\tilde{\mathcal N})$, radius $r_{\mathcal N}$}
    $m\leftarrow1+D+\frac{D(D+1)}{2}$; fetch at least $m$ closest samples to $\bm{x}'$\;
    \While{true}{
        Retrieve the current nearest-candidate pool $\mathcal C$\;
        Build $[1,-\hat{\bm a}_i^\top,\hat{\bm\lambda}_i^\top]^\top$ for $\idxpt{x}{i}{}\in\mathcal C$, build feature matrix $\bm{\Phi}_{\mathcal{C}}$\;
        Build proposed full-rank $m$-point set $\tilde{\mathcal N}$ with column-pivoted QR on $\bm{\Phi}_{\mathcal{C}}^\top$, expanding $\mathcal C$ as necessary\;
        \If{$\tilde{\mathcal N}$ rank-deficient \textbf{and} all samples exhausted}{
            \Return{\textsc{NeighbourFailure}}\;
        }
        \While{$|\tilde{\mathcal N}| \leq \overline{N}$}{
            $r_{\mathcal N}\leftarrow\max_{i\in\tilde{\mathcal N}}\|\idxpt{x}{i}{}-\bm x'\|$\;
            Rebuild $\bm\Phi_{\mathcal N}$, normalised using radius $r_{\tilde{\mathcal N}}$\;
            Using SVD, $\mu(\tilde{\mathcal N})\leftarrow\underline\sigma(\bm\Phi_{\mathcal N})/\sqrt{|\tilde{\mathcal N}|}$\;
            \If{$\mu(\tilde{\mathcal N})\geq\mu_0$}{\Return{$\tilde{\mathcal N},\mu(\tilde{\mathcal N}),r_{\tilde{\mathcal N}}$}\;}
            \Else{
                \If{$|\tilde{\mathcal N}| = \overline{N}$ \textbf{or} all points in $\mathcal C$ exhausted}{
                    \textbf{break}\;
                }
                \Else{
                    Add $\idxpt{x}{i}{} \in \mathcal{C} \setminus \tilde{\mathcal{N}}$ that results in maximum $\mu(\tilde{\mathcal{N}} \cup \idxpt{x}{i}{})$\;
                }
            }
            
        }
        \If{all samples exhausted}{
            \Return{\textsc{NeighbourFailure}}\;
        }
        \Else{
            Expand candidate pool $\mathcal{C}$\;
        }
    }
\end{algorithm}

\section{Direct Inference on Admissible Sets (Diona)}
\label{sec:diona-method}

We build on our theoretical results and use our practical workarounds to build a practical nonparametric regression method called Direct Inference on Admissible Sets (\textsc{Diona}), summarised in Algorithm~\ref{alg:diona}. \textsc{Diona} picks neighbouring samples of the query point $\bm{x}'$, and assembles the joint linear constraints on their corresponding triples (collected in the decision variable $\bm{\psi}$), leading to an optimisation problem for local regression. This replaces a long and global model training process, with a per-query optimisation problem to compute the state estimate and its corresponding uncertainty.


\begin{algorithm}[t]
    \small
    \DontPrintSemicolon
    \caption{Direct Inference on Admissible Sets (\textsc{Diona})}
    \label{alg:diona}
    \KwInput{query $\bm{x}'$, samples $\itrpt{X}{n}{}$, poisedness threshold $\mu_0>0$}
    \KwOutput{scalar estimate $\tilde u'$, bounds $[\underline u',\overline u']$}
    Extract noise estimate $\tilde{\epsilon}$ (if true noise bound $\overline{\epsilon}$ not \textit{a priori} known, see \cite{novara2016setmembership})\;
    $\mathcal{N} \leftarrow \textsc{NeighbourSelection}(\bm{x}',\itrpt{X}{n}{},\mu_0)$ (Algorithm~\ref{alg:neighbour-selection})\;
    Solve for (local) Hessian Lipschitz constant estimate $\tilde{L}_H$ (Section~\ref{sec:parameter-estimation})\;
    Assemble the admissible triples polytope $\mathcal{A}=\{\bm\psi:\bm\Xi\bm\psi\leq\bm b\}$ over $\{\bm{x}'\}\cup\mathcal N$\;
    Solve for admissible state bounds $\underline u'\leftarrow\min_{\bm\psi\in\mathcal A}u'$; $\overline u'\leftarrow\max_{\bm\psi\in\mathcal A}u'$\;
    \Return{$\tilde{u}' = \frac{\underline u'+\overline u'}{2}$, $[\underline u',\overline u']$\;}
\end{algorithm}

\section{Comparative study}

We subject \textsc{Diona} to an extensive benchmark test for data-driven regression of benchmark functions, comparing with methods from literature and common practice in terms of prediction accuracy, uncertainty quantification, and computational burden. The Julia code base, as well as the \textsc{Diona} package, is available in \verb|https://github.com/lorenzosabugjr/Diona|.

\subsection{Benchmark setup}

We consider the regression of different underlying functions (from $D=3$ to $D=8$) summarised in Table~\ref{tab:benchmark-functions}, taken from \cite{surjanovic2013virtual}. To present a comprehensive analysis, we have considered different data layouts, and with noisy samples. For each test function $u(\bm{x})$, we generated data sets of increasing density (with $500D$, $1000D$, $2000D$ samples) in a Sobol sequence layout on the respective domains. To evaluate the practical noise robustness, we tested with exact (noiseless) samples and with additive Gaussian noise (20 dB SNR)--a deviation from Assumption~\ref{assm:finite-noise} for \textsc{Diona}.

We compare all methods in terms of the (approximate) $L_2$ relative error which, with $\bm{U} \doteq \Big(u(\bm{q})\Big)_{\bm{q} \in \bm{Q}}$ and $\tilde{\bm{U}} \doteq \Big(\tilde{u}(\bm{q})\Big)_{\bm{q} \in \bm{Q}}$, we measure as
\[
    \epsilon_{L2} = \frac{\left\|\bm{U} - \tilde{\bm{U}}\right\|}{\|\bm{U}\|}
\]
with $\bm{Q}$ being a collection of $5000D$ Sobol-distributed query points $\bm{q}$ (and ensuring no coincidence with any training samples). 

For regressors with UQ---GP, NSM, \textsc{Diona}---we measure the relative uncertainty width, given for every $\bm{q} \in \bm{Q}$ as
\[
    w_r(\bm{q}) = \frac{w(\bm{q})}{|u(\bm{q})|}
\]
noting the denominator being the \textit{truth} function value. Lastly, we compare among these UQ-capable regressors the aggregate \textit{coverage} of the truth value within the estimated bounds
\[
    \rho = \frac{\sum_{\bm{q}\in\bm{Q}} \mathds{1}\Big(\underline{u}(\bm{q}) \leq u(\bm{q}) \leq \overline{u}(\bm{q})\Big)}{|\bm{Q}|}
\]
with $\mathds{1}(\cdot)$ being the indicator function. As the stochastic uncertainty for GP is of a different nature than the (estimated) worst-case bounds of NSM and \textsc{Diona}, we measure the GP-specific uncertainty metrics considering its $\pm2\sigma_{GP}$ bounds. A low $w_r$ and high $\rho$ are essential for bound informativeness.

To compare the computational burden, we measured the (offline) training time (if applicable), and the online per-query (inference) times. All the computations are performed in a PC with Intel Core~i9 3.20~GHz, 160~GB RAM, and NVIDIA RTX~4500~Ada (24~GB VRAM). All the computations were run on the CPU, except for the NN GPU/CUDA-based pre-training.

\begin{table}[t]
\centering
\begin{tabular}{p{1.6in}p{2.5in}}
\toprule
\textbf{Name} & \textbf{Details} \\
\midrule
Neural network (NN) \cite{bishop1995neural} & Implemented with \verb|Lux.jl|. Fully connected network with $\ceil{D/2}+1$ hidden layers (64 tanh units). Trained to minimise MSE with Adam (25000 epochs with learning rate of $1\times 10^{-3}$). Used CUDA via \verb|LuxCUDA.jl|. \\
Gaussian process (GP) \cite{rasmussen2006gaussian}            & Sparse variational GP using \verb|AbstractGPs.jl|, \verb|ApproximateGPs.jl|, and \verb|KernelFunctions.jl|, Mat\'{e}rn-5/2 kernel, and $100D$ inducing points.                    \\
Nonlinear set membership (NSM) \cite{milanese2004setmembership}   & Implemented in Julia with estimated noise bound $\tilde{\epsilon}$ and value Lipschitz constant $\tilde{\gamma}$ computed as in \cite{novara2016setmembership}. $\tilde{\gamma}$ is then scaled by 2.5. \\
Radial basis functions (RBF) \cite{buhmann2003radial}     & Using \verb|Surrogates.jl| \textsc{RadialBasis} with the default linear radial kernel, with all default values.                    \\   
Inverse distance weighting (IDW) & Using \verb|Surrogates.jl| \textsc{InverseDistanceSurrogate}, using all training samples. Used inverse-distance exponent $p=D+2$, otherwise all default settings. \\
\textsc{Diona} (ours)            & As described in Section~\ref{sec:implementation-notes}-\ref{sec:diona-method}, and with $\tilde{L}_H$ scaled by 2.5. LPs are computed with \verb|HiGHS.jl|.                    \\
\bottomrule
\end{tabular}
\caption{Compared data-driven regressors}
\label{tab:benchmark-regressors}
\end{table}

\begin{table}[t]
\centering
\begin{tabular}{lllp{2.0in}}
\toprule
\textbf{Function name} & \textbf{$D$} & \textbf{Domain} & \textbf{Properties} \\
\midrule
Ishigami               & 3           & $[-\pi,\pi]^3$ & Oscillatory, with growing amplitude, variable interactions                    \\
Park                   & 4           & $[0,1]^4$ & Nonseparable, spatially varying curvature                    \\
Friedman               & 5           & $[0,1]^5$ & Nonlinear, nonseparable, with linear and quadratic additive terms \\
Hartmann               & 6           & $[0,1]^6$ & Multimodal                    \\
Piston (simulation)    & 7           & (see \cite{surjanovic2013virtual}) & Nonlinear physical model, input variable coupling                    \\
Rosenbrock             & 8           & $[-5,10]^8$ & Narrow curved valley, strong input coupling                   \\
\bottomrule
\end{tabular}
\caption{Benchmark functions}
\label{tab:benchmark-functions}
\end{table}

\subsection{Results: regression accuracy}
We show in Fig.~\ref{fig:results-relative-l2} the $L_2$ relative errors ($\epsilon_{L2}$) per benchmark function, in relation to the samples density and presence of (20~dB) noise. In the graphs, the different bars in the same bundle represent the metrics at increasing samples density. We see that for all noiseless cases, \textsc{Diona}, NSM, RBF, and IDW were consistent in their decreasing $\epsilon_{L2}$ trends with increasing samples density. The trends with \textsc{Diona} generally reflects our theoretical results (although we used only estimated $\overline{\epsilon}$ and $L_H$), that the admissible triples set (and in turn, the state bounds $\overline{u}, \underline{u}$) shrink with denser samples sets. In fact, the same arguments can be made for NSM, IDW, and RBF, because the anchor samples from which to make the predictions get closer and closer. On the other hand, there is no clear trend for NN $\epsilon_{L2}$ w.r.t. the samples set density. This is due to its nonconvex training process, which can converge to different weights, which, across the denser input samples, can be uncorrelated. For GP, the $L_2$ errors have stayed mostly the same, in the range of $7\times10^{-3}$ to $8\times10^{-3}$.

The trends and sensitivity of $\epsilon_{L2}$ with 20~dB noise paint a different picture, with '$+$' markers showing the metrics in the noisy case, and their gap from the bars showing the sensitivity. All the compared methods attained $\epsilon_{L2}$ ranging from $1\times10^{-2}$ to $1\times10^{-1}$, which is now a much narrower difference in accuracy results than in the noiseless case. This said, all the advantages of \textsc{Diona} for $\overline{\epsilon}=0$ are now erased: consistent with Lemma~\ref{lemma:worst-case-diameter-states-derivatives}, $\overline{\epsilon} >0$ no longer guarantees that denser data sets shrink the worst-case error to zero. Nevertheless, its results are still highly competitive w.r.t. the other regressors.

\begin{figure}[t]
\centering
\includegraphics[width=\textwidth]{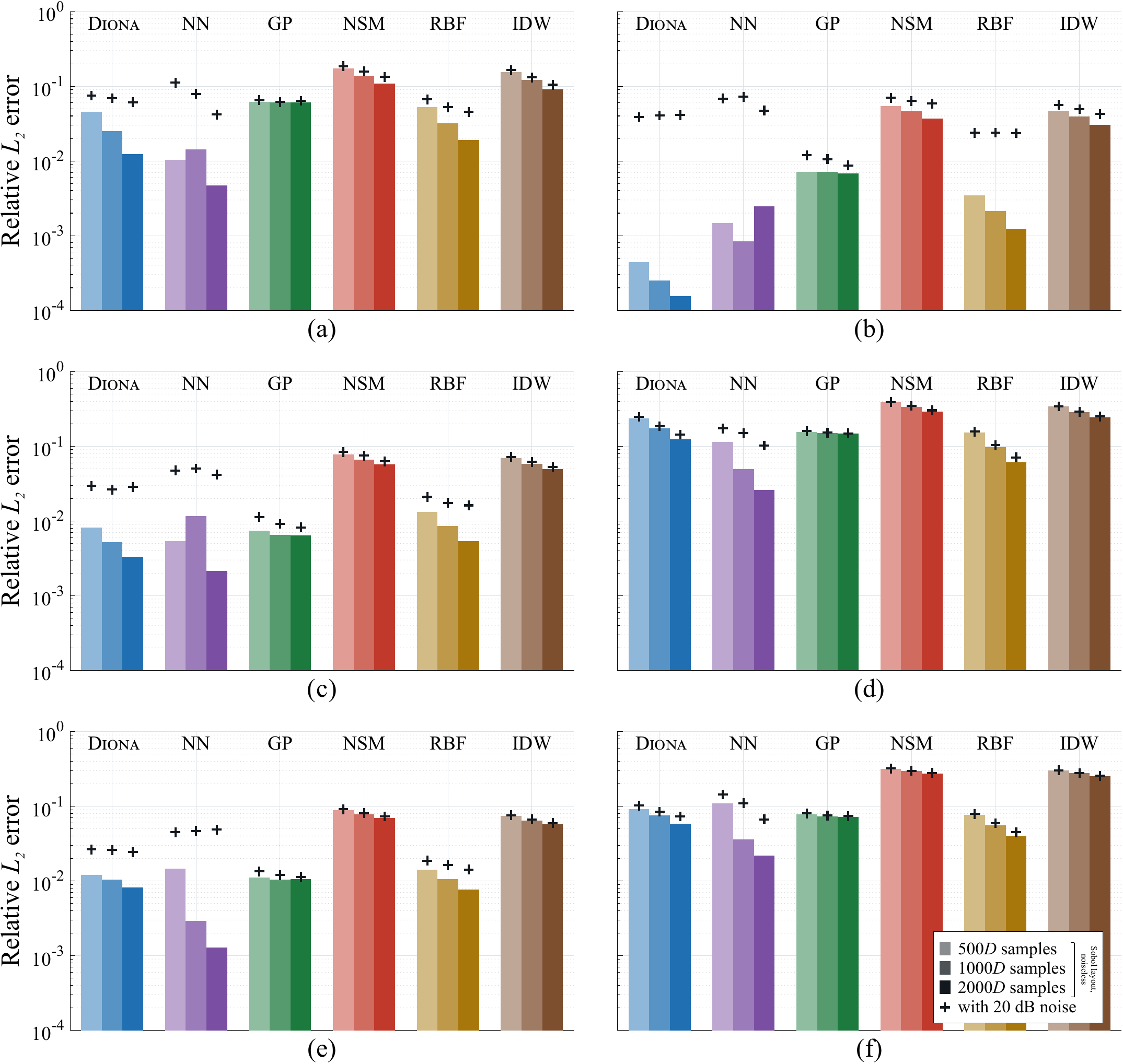}
\caption{Results: relative $L_2$ error\\(a) Ishigami, (b) Park, (c) Friedman, (d) Hartmann, (e) Piston, (f) Rosenbrock.}
\label{fig:results-relative-l2}
\end{figure}

\subsubsection{Results: uncertainty informativeness}
Fig.~\ref{fig:results-uncertainty} shows a $xy$-scatterplot with the median relative uncertainty width $w_r$ vs the truth coverage $\rho$. We plot here the resulting metrics for the \textsc{Diona}, GP, and NSM in increasing samples count/density (shows as larger markers), with noiseless and 20~dB noise-corrupted data sets shown as $\circ$ and $+$ markers, respectively. Markers to the lower-left corner of the plots are generally better (more informative UQ), signifying tighter intervals that include the truth values. Across all tested functions, NSM resulted in the highest median $w_r$, which in higher dimensional Hartmann ($D=6$) and Rosenbrock ($D=8$) even exceed $w_r = 1.0$ while having 100\% coverage, suggesting conservative and less informative bounds. This is attributed to its linear uncertainty growth w.r.t. the distance from the closest sample (see Fig.~\ref{fig:diona-visualisation}). Our \textsc{Diona}, on the other hand, has consistently achieved better $w_r$ than NSM, due to its consideration of the (estimated local) curvature. In fact, \textsc{Diona} is competitive with GP in this aspect: its $w_r$ bars are either better or no more than 1 order of magnitude worse than that of GP. In face of 20~dB noise, the GP $w_r$ markers nearly coincide with those in the noiseless case, showing high noise robustness. NSM comes as second, and lastly, \textsc{Diona} shows a marked difference in coverage, especially with Park and Friedman functions.


\begin{figure}[t]
\centering
\includegraphics[width=\textwidth]{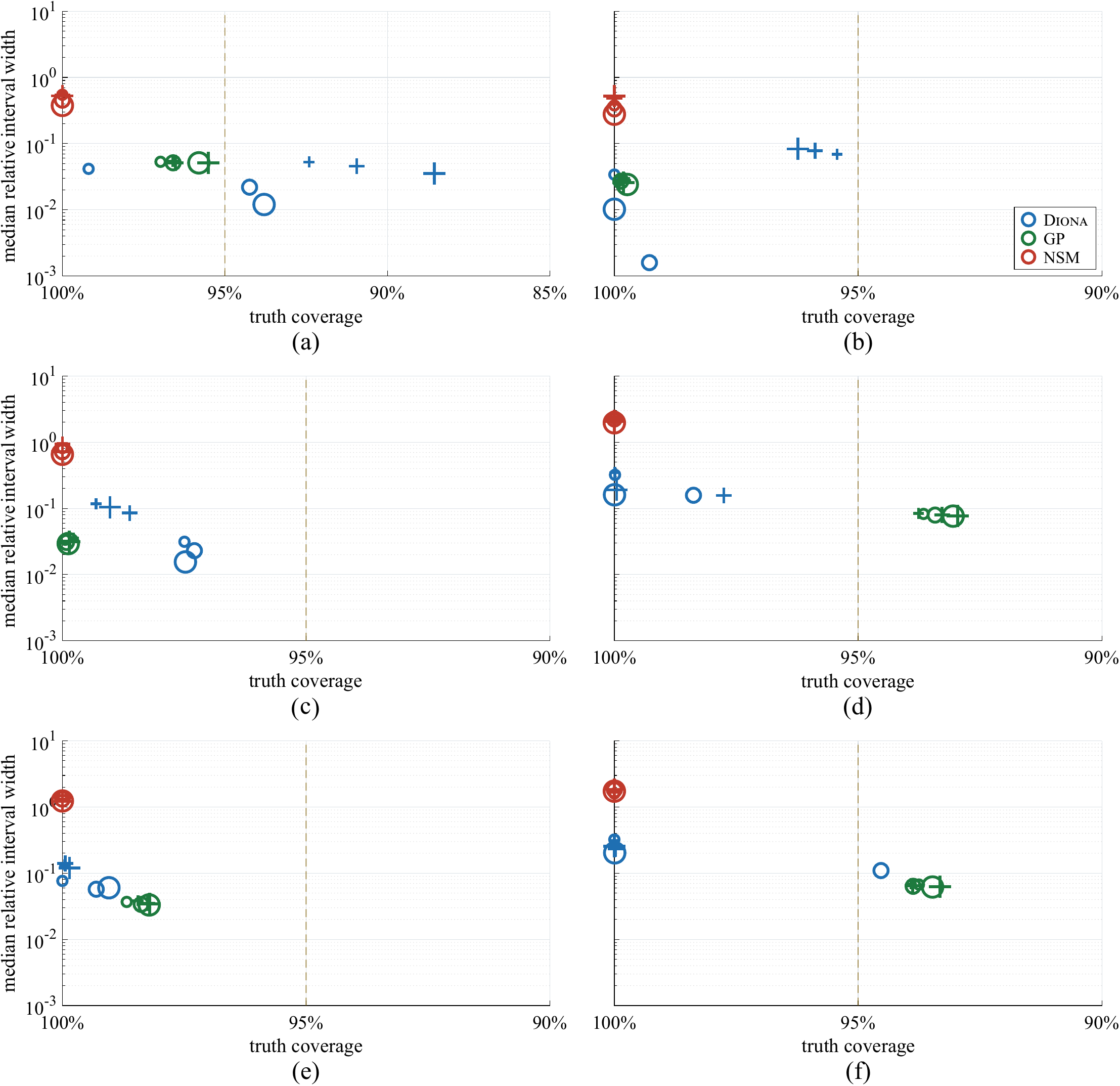}
\caption{Results: relative uncertainty interval vs truth coverage\\(a) Ishigami, (b) Park, (c) Friedman, (d) Hartmann, (e) Piston, (f) Rosenbrock.}
\label{fig:results-uncertainty}
\end{figure}

\subsection{Results: empirical computational burden}
Fig.~\ref{fig:results-computing-times} shows the computational burden $xy$-scatterplot for the inference (per-query) times vs training times, using the same marker conventions as the previous figure. The markers to the lower-left corner are better, meaning shorter training and per-query times. Note that while \textsc{Diona} has zero training time, its markers are placed to the leftmost axis for visualisation. Across all tested functions, there is no significant difference in having noiseless and noisy samples. In fact, it was also quite consistent how the relative ordering of the compared methods lines up w.r.t. the training times. Unsurprisingly, GP and NN take the rightmost markers (with training times in the order of minutes), followed by RBF and NSM. IDW has resulted in the best trade-off between per-query and training times, with markers towards the lower left of the plots. Lastly, \textsc{Diona} has unsurprisingly taken the most per-query time, since every query entails solving a small optimisation problem, while the others (except for NSM) merely involve evaluating a fitted (trained) model/surrogate function. IDW, NSM, RBF, and GP showed an increasing trend in both per-query and training times when presented with denser data sets, while \textsc{Diona} and NN showed no changes in computing times.


\begin{figure}[t]
\centering
\includegraphics[width=\textwidth]{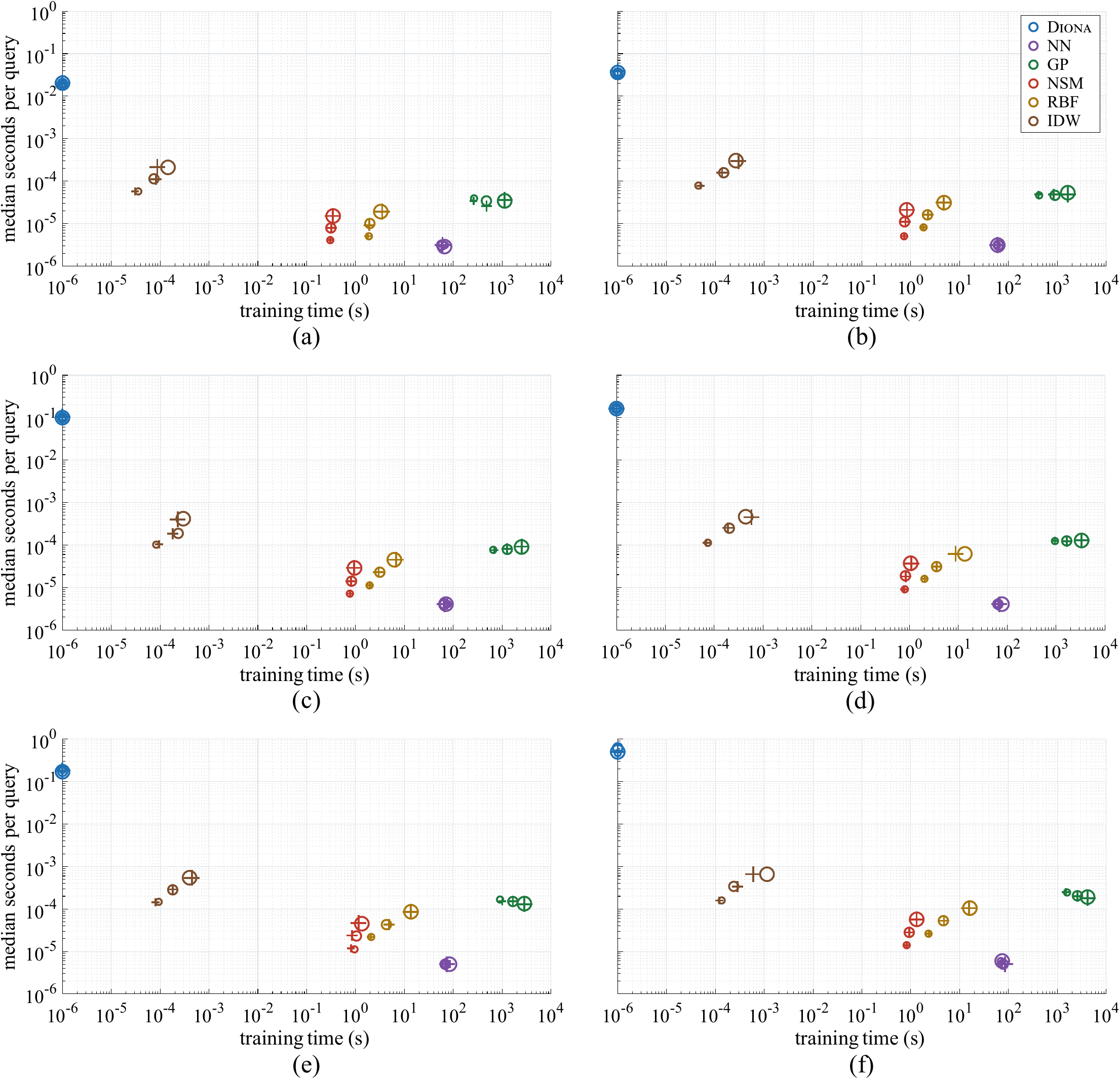}
\caption{Results: median per-query times vs training times\\(a) Ishigami, (b) Park, (c) Friedman, (d) Hartmann, (e) Piston, (f) Rosenbrock. \\ \textsc{Diona} (with zero training times) is placed to the left for comparative visualisation.}
\label{fig:results-computing-times}
\end{figure}

\subsection{Synthesis}

The above comparison to accuracy, UQ informativeness, and computational times point to an observed trade-off between our proposed \textsc{Diona} vs other methods. While \textsc{Diona} is arguably competitive w.r.t. the other compared algorithms, especially NN and GP, the true trade-off comes from the computational demands of the application. In particular, \textsc{Diona} can be better when the application fundamentally cannot afford a long global training loop, e.g., when data samples are introduced in real-time, and the predictions are only required for a few points, e.g., for rapid decision making. 

However, the disadvantage of such per-query methods is the lack of compressive representation of the data, meaning that as it stands, \textsc{Diona} needs the entire data set ready for fetching, imposing an increasing (and at times, unreasonable) demand for storage. Furthermore, the per-query solve time scales polynomially with $D$, while such a scaling does not apply to evaluating a pre-trained model. Therefore, for huge static data sets, it pays to perform a global training process arriving at a surrogate that is both a compressive representation, and a quickly-evaluable function. The user therefore needs to ponder on these trade-offs when choosing the most appropriate regressor for a particular application.

\section{Conclusions and outlook}

In this paper we introduced Direct Inference on Admissible Sets (\textsc{Diona}) that uses local information around a query point, together with Taylor relations-based constraints, to cast the estimation and uncertainty quantification of an unknown function as a series of linear programs. Such a formulation eliminates the need for a long, offline training stage, because the computational burden is paid per query. In a benchmark on six functions of three to eight dimensions, \textsc{Diona} was more accurate than GPs in the noiseless setting especially on the densest data sets. Furthermore, \textsc{Diona}, like GP, produced informative bounds containing the true values at more than 95\% of query points in four functions. We note an explicit trade-off on the online vs offline computational burden: while \textsc{Diona}'s time to a first prediction is orders of magnitude shorter, each \textsc{Diona} query costs more than evaluating a trained model. This low latency lends an advantage when relatively few queries are needed between data set updates.

Ongoing work includes a practicable hybrid offline-online approaches to address the practical computational trade-offs, reducing per-query computational burden and storage requirements, while maintaining the responsiveness to new samples. Furthermore, the inclusion of physics information, especially to extrapolate predictions outside the data set domain is an interesting avenue for future work.

\bibliographystyle{elsarticle-num-names}
\bibliography{diona-related-work}

@article{milanese2004setmembership,
  author   = {Milanese, Mario and Novara, Carlo},
  title    = {Set Membership Identification of Nonlinear Systems},
  journal  = {Automatica},
  year     = {2004},
  volume   = {40},
  number   = {6},
  pages    = {957--975},
  doi      = {10.1016/j.automatica.2004.02.002}
}

@article{martin2024polynomial,
  author   = {Martin, Tim and Allg{\"o}wer, Frank},
  title    = {Data-Driven System Analysis of Nonlinear Systems Using Polynomial Approximation},
  journal  = {IEEE Transactions on Automatic Control},
  year     = {2024},
  volume   = {69},
  number   = {7},
  pages    = {4261--4274},
  doi      = {10.1109/TAC.2023.3321212}
}

@article{roll2005direct,
  author   = {Roll, Jacob and Nazin, Alexander and Ljung, Lennart},
  title    = {Nonlinear System Identification via Direct Weight Optimization},
  journal  = {Automatica},
  year     = {2005},
  volume   = {41},
  number   = {3},
  pages    = {475--490},
  doi      = {10.1016/j.automatica.2004.11.010}
}

@article{calliess2020lacki,
  author   = {Calliess, Jan-Peter and Roberts, Stephen J. and Rasmussen, Carl Edward and Maciejowski, Jan},
  title    = {{Lazily Adapted Constant Kinky Inference} for Nonparametric Regression and Model-Reference Adaptive Control},
  journal  = {Automatica},
  year     = {2020},
  volume   = {122},
  pages    = {109216},
  doi      = {10.1016/j.automatica.2020.109216}
}

@article{milanese2011unified,
  author   = {Milanese, Mario and Novara, Carlo},
  title    = {Unified Set Membership Theory for Identification, Prediction and Filtering of Nonlinear Systems},
  journal  = {Automatica},
  year     = {2011},
  volume   = {47},
  number   = {10},
  pages    = {2141--2151},
  doi      = {10.1016/j.automatica.2011.03.013}
}

@misc{huang2023lipschitz,
  author        = {Huang, Julien Walden and Roberts, Stephen and Calliess, Jan-Peter},
  title         = {{Lipschitz} Interpolation: Non-parametric Convergence under Bounded Stochastic Noise},
  year          = {2023},
  howpublished  = {arXiv preprint arXiv:2310.07039},
  doi           = {10.48550/arXiv.2310.07039},
  eprint        = {2310.07039},
  archivePrefix = {arXiv}
}

@inproceedings{sabug2026rethinking,
  author        = {Sabug, Lorenzo and Kerrigan, Eric C.},
  title         = {Rethinking Physics-Informed Regression Beyond Training Loops and Bespoke Architectures},
  booktitle     = {Proceedings of the 23rd {IFAC} World Congress},
  year          = {2026},
  address       = {Busan, Korea},
  organization  = {IFAC},
  doi           = {10.48550/arXiv.2512.13217},
  eprint        = {2512.13217},
  archivePrefix = {arXiv},
  primaryClass  = {math.OC}
}

@article{novara2016setmembership,
  author   = {Novara, Carlo},
  title    = {Sparse Set Membership Identification of Nonlinear Functions and Application to Fault Detection},
  journal  = {International Journal of Adaptive Control and Signal Processing},
  year     = {2016},
  volume   = {30},
  number   = {2},
  pages    = {206--223},
  doi      = {10.1002/acs.2539}
}

@article{lu2008adaptive,
  author   = {Lu, George Y. and Wong, David W.},
  title    = {An Adaptive Inverse-Distance Weighting Spatial Interpolation Technique},
  journal  = {Computers \& Geosciences},
  year     = {2008},
  volume   = {34},
  number   = {9},
  pages    = {1044--1055},
  doi      = {10.1016/j.cageo.2007.07.010}
}

@book{bishop1995neural,
  author    = {Bishop, Christopher M.},
  title     = {Neural Networks for Pattern Recognition},
  publisher = {Oxford University Press},
  address   = {Oxford},
  year      = {1995},
  isbn      = {9780198538493},
  doi       = {10.1093/oso/9780198538493.001.0001}
}

@book{buhmann2003radial,
  author    = {Buhmann, Martin D.},
  title     = {Radial Basis Functions: Theory and Implementations},
  publisher = {Cambridge University Press},
  address   = {Cambridge},
  year      = {2003},
  series    = {Cambridge Monographs on Applied and Computational Mathematics},
  volume    = {12},
  isbn      = {9780521633383},
  doi       = {10.1017/CBO9780511543241}
}

@book{rasmussen2006gaussian,
  author    = {Rasmussen, Carl Edward and Williams, Christopher K. I.},
  title     = {{Gaussian} Processes for Machine Learning},
  publisher = {MIT Press},
  address   = {Cambridge, MA},
  year      = {2006},
  isbn      = {9780262182539},
  doi       = {10.7551/mitpress/3206.001.0001}
}

@book{theodoridis2020machine,
  author    = {Theodoridis, Sergios},
  title     = {Machine Learning: A {Bayesian} and Optimization Perspective},
  publisher = {Academic Press},
  edition   = {2},
  year      = {2020},
  isbn      = {9780128188033}
}

@book{trefethen2013approximation,
  author    = {Trefethen, Lloyd N.},
  title     = {Approximation Theory and Approximation Practice},
  publisher = {Society for Industrial and Applied Mathematics},
  address   = {Philadelphia, PA},
  year      = {2013},
  isbn      = {9781611972399}
}

@misc{surjanovic2013virtual,
  author       = {Surjanovic, Sonja and Bingham, Derek},
  title        = {{Virtual Library of Simulation Experiments}: Test Functions and Datasets},
  year         = {2013},
  howpublished = {Simon Fraser University},
  url          = {https://www.sfu.ca/~ssurjano/},
  note         = {Accessed 5 October 2026}
}

@article{dette2014eoptimal,
  author  = {Dette, Holger and Grigoriev, Yuri},
  title   = {{$E$}-optimal designs for second-order response surface models},
  journal = {The Annals of Statistics},
  year    = {2014}, volume = {42}, number = {4}, pages = {1635--1656},
  doi     = {10.1214/14-AOS1241}
}

\section*{Appendix}

\subsection*{Proof of Lemma~\ref{lemma:containment}}
\begin{proof}
    The true collection vector as $\bm{\psi}$ includes the true triple at query point $\bm{\theta}'$, along with those at the neighbouring samples. By Assumptions~\ref{assm:lipschitz-hessian}-\ref{assm:finite-noise}, trivially we have $\bm{\psi} \in \mathcal{A}$ by construction. Since $\admtriplesset$ is the projection of $\mathcal{A}$ onto the query-triple coordinates, and the query point block of $\bm{\psi}$ is $\bm{\theta}'$, we have $\bm{\theta}' \in \admtriplesset$.
\end{proof}

\subsection*{Proof of Lemma~\ref{lemma:worst-case-diameter-states-derivatives}}
\begin{proof}
    Given $\bm{x}'$ and $\bm{\theta}_1, \bm{\theta}_2 \in \admtriplesset$, then $\forall \idxpt{x}{i}{} \in \mathcal{N}$,

    \begin{align}
        \left| \idxval{z}{i}{} - \left(u_1 - \bm{a}_i^\top \bm{g}_1 + \bm{\lambda}_i^\top \bm{h}_1 \right)\right| &\leq \bar{\epsilon} + \frac{L_H}{6} \|\bm{a}_i\|^3 \\
        \left| \idxval{z}{i}{} - \left(u_2 - \bm{a}_i^\top \bm{g}_2 + \bm{\lambda}_i^\top \bm{h}_2 \right)\right| &\leq \bar{\epsilon} + \frac{L_H}{6} \|\bm{a}_i\|^3.
    \end{align}

    Subtracting the first and second inequalities through the triangle inequality, we have

    \[
        \left| \Delta u - \bm{a}_i^\top \Delta \bm{g} + \bm{\lambda}_i^\top \Delta \bm{h} \right| \leq 2 \bar{\epsilon} + \frac{L_H}{3} \|\bm{a}_i\|^3.
    \]

    Now we stack such inequalities for all selected neighbours $\idxpt{x}{i}{} \in \mathcal{N}$, and use the neighbourhood radius $r \doteq \max_{\idxpt{x}{i}{} \in \mathcal{N}} \|\bm{a}_i\|$ to build the normalised quantities $\hat{\bm{a}}_i$ and $\hat{\bm\lambda}_i$ as in Section~\ref{sec:neighbour-samples}. Substituting, and collecting the $r$-scalings onto $\Delta u, \Delta\bm g, \Delta\bm h$ rather than onto the coefficients, each stacked row becomes exactly $\bm\phi_i^\top \Delta$:

    \begin{equation}
        \left|\underbrace{\begin{bmatrix}
            1 & -\hat{\bm{a}}_{i_1}^\top & \hat{\bm\lambda}_{i_1}^\top \\
            1 & -\hat{\bm{a}}_{i_2}^\top & \hat{\bm\lambda}_{i_2}^\top \\
            \vdots & \vdots & \vdots \\
            1 & -\hat{\bm{a}}_{i_N}^\top & \hat{\bm\lambda}_{i_N}^\top
        \end{bmatrix}}_{\bm\Phi}
        \underbrace{\begin{bmatrix}
            \Delta u \\
            r\Delta \bm{g} \\
            r^2\Delta \bm{h}
        \end{bmatrix}}_{\Delta}\right| \leq
        \left(2\bar{\epsilon} + \frac{L_H}{3}r^3\right) \bm{1},
    \end{equation}

    \noindent an element-by-element inequality, where $\bm\Phi$ depends only on the normalised displacements $\hat{\bm{a}}_i$ and not on the neighbourhood radius $r$ (Section~\ref{sec:neighbour-samples}). Since $\mathcal{N}$ is $\mu_0$-poised (Definition~\ref{def:poised-neighbour-set}), $\bm\Phi$ is full column rank, $\bm\Phi^\dagger \bm\Phi = \bm{I}$, and it applies that

    \begin{align}
        \left\|\bm\Phi\Delta\right\|_2 &\leq \left\|\left(2\bar{\epsilon} + \frac{L_H}{3}r^3\right) \bm{1}\right\|_2 \\
        \|\Delta\|_2 &\leq \|\bm\Phi^\dagger\|_2 \|\bm\Phi \Delta\|_2 \\
        ~ &\leq \|\bm\Phi^\dagger\|_2 \left\|\left(2\bar{\epsilon} + \frac{L_H}{3}r^3\right) \bm{1}\right\|_2 \\
        ~ &= \left(2\bar{\epsilon} + \frac{L_H}{3}r^3\right) \|\bm\Phi^\dagger\|_2 \|\bm{1}\|_2
    \end{align}

    \noindent and we invoke that $\|\bm\Phi^\dagger\|_2 = \frac{1}{\underline{\sigma}(\bm\Phi)} \leq \frac{1}{\mu_0 \sqrt{N}}$ and $\|\bm{1}\|_2 = \sqrt{N}$, therefore

    \[
        \left\|\begin{bmatrix}
            \Delta u \\
            r\Delta \bm{g} \\
            r^2\Delta \bm{h}
        \end{bmatrix}\right\|_2 \leq
        \frac{1}{\mu_0}
        \left(2\bar{\epsilon} + \frac{L_H}{3}r^3\right).
    \]

    Finally, since each block of $\Delta$ has norm no greater than $\|\Delta\|_2$, we arrive at \eqref{eqn:worst-case-diam}, which concludes the proof.
\end{proof}

\subsection*{Proof of Lemma~\ref{lemma:dense-samples}}

\begin{proof}
    For this proof, we assume for simplicity and w.l.o.g. that the data set is augmented by one sample at every iteration. Consider that at iteration $n_0$ with the corresponding data set $\itrpt{X}{n_0}{}$, $\bm{x}'$ has the minimum neighbour set radius $r_{\mathcal{N}}^{\iter{n_0}*}$. We assume, for the sake of contradiction, that such radius will never change/shrink, i.e., as $n \rightarrow \infty,\; r_{\mathcal{N}}^{\iter{n}*} = r_{\mathcal{N}}^{\iter{n_0}*}$.

    Consider a smaller ball $\mathcal{B}(\bm{x}', r_s)$ where $r_s = \frac{1}{2} r_{\mathcal{N}}^{\iter{n_0}*}$, in which we place a reference points set $\bm{S} \doteq \left\{\bm{s}_m \in \mathcal{B}(\bm{x}', r_s), m = 1,\dots, N \right\}$ forming a $\mu_s$-poised configuration, i.e. $\underline{\sigma} (\bm{\Phi}(\bm S)) > \mu_s \sqrt{N}$ for some fixed $\mu_s > \mu_0$. Such a configuration exists since $\mu_0 < \bar{\mu}_D$: as $\bm{x}'$ is interior, $\mathcal{B}(\bm{x}', r_s)$ contains a smaller ball inside $\mathcal{R}$, into which any configuration can be scaled without changing $\bm{\Phi}$, as $\bm{\Phi}$ is normalised by the radius.

    We define for each $\bm{s}_m$ its respective closest sample
    \[
        \titrpt{s}{n}{m} \doteq \arg \min_{\idxpt{x}{i}{} \in \itrpt{X}{n}{}} \|\idxpt{x}{i}{} - \bm{s}_m\|,
    \]

    \noindent and collecting those points, we build $\titrpt{\Phi}{n}{}$. Now, due to the generation of dense samples, we have

    \[
        \forall \bm{s}_m \in \bm{S}, \;\; \|\titrpt{s}{n}{m} - \bm{s}_m\| \xrightarrow[n \rightarrow \infty]{} 0,
    \]

    \noindent and therefore $\| \titrpt{\Phi}{n}{} - \bm{\Phi}(\bm S) \| \xrightarrow[n \rightarrow \infty]{} 0$. By continuity of the singular values, $\underline{\sigma}(\titrpt{\Phi}{n}{}) \xrightarrow[n \rightarrow \infty]{} \underline{\sigma}(\bm\Phi(\bm S)) > \mu_s \sqrt{N} > \mu_0 \sqrt{N}$, so

    \[
        \exists M < \infty : \underline{\sigma}(\titrpt{\Phi}{n_0+M}{}) > \mu_0 \sqrt{N},
    \]

    \noindent and the corresponding samples set $\titrpt{S}{n_0+M}{}$ is $\mu_0$-poised with maximum radius at most $r_s + \max_m \|\titrpt{s}{n_0+M}{m} - \bm{s}_m\| \leq \frac{3}{4} r_{\mathcal{N}}^{\iter{n_0}*}$ (taking $M$ larger if needed), and therefore $r_{\mathcal{N}}^{\iter{n_0 + M}*} \leq \frac{3}{4} r_{\mathcal{N}}^{\iter{n_0}*}$, contradicting our assumption. We repeat the above arguments recursively, leading to

    \[
        r_{\mathcal{N}}^{\iter{n}*} \xrightarrow[n \rightarrow \infty]{} 0,
    \]

    \noindent which closes our proof.
\end{proof}

\subsection*{Proof of Theorem~\ref{theorem:convergence-true-triple}}

\begin{proof}
    By Lemma~\ref{lemma:containment}, $\bm{\theta}' \in \admtriplesset$. Furthermore, from Lemma~\ref{lemma:worst-case-diameter-states-derivatives} with $\overline{\epsilon}=0$, \eqref{eqn:worst-case-diam} reduces to
    
    \begin{align*}
        \| \bm{\theta}_1 - \bm{\theta}_2 \| \leq \frac{L_H}{3\mu_0} \sqrt{r^6_{\mathcal{N}} + r^4_{\mathcal{N}} + r^2_{\mathcal{N}}}
    \end{align*}
    
    \noindent for $\bm{\theta}_1,\bm{\theta}_2 \in \admtriplesset$. Lemma~\ref{lemma:dense-samples} (with Assumption~\ref{assm:poised-neighbour-set}), and replacing $\bm{\theta}_1$ with a generic triple $\bm{\theta} \in \admtriplesset$ and $\bm{\theta}_2$ with the true triple $\bm{\theta}'$, leads us to \eqref{eqn:convergence-true-triple}, closing the proof.
\end{proof}

\end{document}